 \def\newblock{\ }%
\def\todo{\textcolor{green}{TODO}}
\newcommand{\upt}{}
\newtheorem{theorem}{Theorem}
\newtheorem{lemma}{Lemma}
\newtheorem{definition}{Definition}
\newtheorem{proposition}{Proposition}
\renewcommand{\qed}{\hfill \mbox{\raggedright $\square$}}
\begin{document}

%
\RUNAUTHOR{Asadpour and Nazerzadeh}

\RUNTITLE{Maximizing Stochastic Monotone Submodular Functions}

\TITLE{{\Large Maximizing Stochastic Monotone Submodular Functions}}

\ARTICLEAUTHORS{
\AUTHOR{Arash Asadpour}
\AFF{Stern School of Business, New York University, New York, NY, \EMAIL{aasadpou@stern.nyu.edu}}
\AUTHOR{Hamid Nazerzadeh}
\AFF{Marshall School of Business, University of Southern California, Los Angeles, CA, \EMAIL{hamidnz@marshall.usc.edu}} 
}
\AREAOFREVIEW{Optimization.}
\SUBJECTCLASS{Analysis of algorithms: performance guarantee; Nonlinear programming: submodular maximization; Programming: stochastic optimization.}
\KEYWORDS{submodular maximization, stochastic optimization, adaptivity gap, influence spread in social networks}

\def\newblock{\hskip .11em plus .33em minus .07em}
\def\ff{{f}}
\def\FF{{\mathcal F}}
\def\1{{\mathds{1}}}
\def\D{}
\def\c{{\textbf{c}}}
\def\A{{\cal A}}
\def\B{{\cal B}}
\def\F{{\cal A}}
\def\I{{\cal I}}
\def\M{{\cal M}}
\def\P{{\cal P}}
\def\V{{\cal V}}
\def\ih{x_i}
\def\Ih{\hat{I}}
\def\yh{\hat{y}}
\def\sh{\hat{S}}
\def\R{{\mathbb R}}
\def\E{{\textrm{\textbf{E}}}}
\def\Pr{{\textrm{\textbf{Pr}}}}
\def\Var{{\textrm{\textbf{Var}}}}
\def\Cov{{\textrm{Cov}}}
\def\efac{1-\frac{1}{e}}
\def\kfac{(1-\frac{1}{k})}
\def\ith{i^{\tiny{\mbox{th}}}}
\def\cte{{1\over e}}
\def\C{{\mathcal C}}
\def\adp{{\sc adapt}}
\def\todo{{\bf TODO\ }}
\def\argmax{\mbox{argmax}}
\def\Rnonneg{{\R_+}}
\def\nill{{\circ}}
\def\Thetazir{{\underline{\Theta}}}
\def\thetazir{{\underline{\theta}}}
\def\OPTNA{\textrm{OPT}_{\textrm{NA}}}
\def\OPTA{\textrm{OPT}_{\textrm{A}}}
\def\OPT{\textrm{OPT}}
\def\bary{\bar{y}}
\def\haty{\hat{y}}
\def\S{{\mathcal X}}

\ABSTRACT{
We study the problem of maximizing a stochastic monotone submodular function with respect to a matroid constraint. Due to the presence of diminishing marginal values in real-world problems, our model can capture the effect of stochasticity in a wide range of applications. We show that the adaptivity gap --- the ratio between the values of optimal adaptive and optimal non-adaptive policies --- is bounded and is equal to $\tfrac{e}{e - 1}$. We propose a polynomial-time non-adaptive policy that achieves this bound. 
We also present an adaptive myopic policy that obtains at least half of the optimal value. Furthermore, when the matroid is uniform,
the myopic policy achieves the optimal approximation ratio of $\efac$.
}

\maketitle

\section{Introduction} \label{sec:intro}
The problem of maximizing submodular  functions has been extensively
studied in operations research and computer science. For a ground set $\A$,
the set function $f: 2^{\A} \rightarrow \R$ is called \emph{submodular} if for any
two subsets $S, T \subseteq \A$, we have $f(S \cup T) + f(S \cap T) \le f(S) + f(T)$.
An equivalent definition is that the marginal value of adding any element is diminishing. In other words, for any $S\subseteq T\subseteq \A$ and $j\in \A$, we have $f(T\cup\{j\}) - f(T) \le f(S\cup\{j\}) - f(S).$
Also, a set function $f$ is called \emph{monotone} if for any two subsets $S \subseteq T \subseteq \A$, we have $f(S) \le f(T).$

Due to the presence of diminishing marginal values, a wide range of optimization problems that arise in the real world can be modeled as maximizing a monotone submodular function with
respect to some feasibility constraints. One instance is the welfare
maximization problem (see, e.g.,~\citealt{DobzinskiS06, Vondrak08, Feige06}), which is to find an optimal allocation of resources to agents where the
utilities of the agents are submodular. Here, submodularity corresponds to
the law of diminishing return in the economy (see~\citealt{Samuelson01}).

Another application of this problem is capital budgeting in which a
risk-averse investor with a limited budget is interested in finding
the optimal investment portfolio over different projects (see, e.g.,~\citealt{Weingartner63,AhmedA08}). Risk aversion can be modeled by concave utility functions where, informally speaking, the utility of a certain expectation is higher than the expectation of the utilities of the corresponding uncertain outcomes. As argued by~\cite{AhmedA08}, the utility function of a
risk-averse investor over a set of investment possibilities can be modeled by submodular functions. Such functions are also non-negative and monotone by nature.

Another example is the problem of viral marketing and maximizing
influence through the network (see, e.g.,~\citealt{KempeKT03,MosselR07}), where the goal is to choose an initial ``active'' set of people and to provide them with free coupons or promotions so as to
maximize the spread of a technology or behavior in a social network.
It is well known that under a wide variety of models for influence propagation in
networks (e.g.,~cascade model of~\citealt{KempeKT03}), the expected size
of the final cascade is a submodular function of the set of
initially activated individuals. Also, due to budget limitations,
the number of people that we can directly target is
bounded. Hence, the problem of maximizing influence can be seen as a
maximizing submodular function problem subject to cardinality
constraints.

Yet another example is the problem of optimal  placement of sensors
for environmental monitoring (see, e.g.,~\citealt{KrauseG05,KrauseG07}) where the objective is to place sensors in the environment in order to most
effectively reduce uncertainty in observations. This problem can be
modeled by entropy minimization and, because of concavity of the
entropy function, it is a special case of submodular optimization. 

For the above problems and many others, the constraints can be modeled by a matroid. A finite {\em matroid} $\M$ is defined by a pair $(\A, \I)$, where $\A$ is a ground set of elements and $\I$ is a collection of subsets of $\A$ (called the {\em independent sets}) with the following properties:
\begin{enumerate}
\item  Every subset of an independent set is independent, i.e., if $T \in \I$ and $S \subseteq T$, then $S \in \I$.
\item  If $S$ and $T$ are two independent sets and $T$ has more elements than $S$, then there exists an element in $T$ that is not in $S$ and when added to $S$ still gives an independent set.
\end{enumerate}

Two important special cases are {\em uniform matroid} and {\em partition matroid}.
In a uniform matroid, all the subsets of $\A$ of size at most $k$,
for a given $k$, are independent. Uniform matroids represent cardinality constraints.
A partition matroid is defined over a partition of set $\A$, where every independent set includes at most one element
from each set in the partition.

The celebrated result of~\cite{NemhauserWF78} shows
that for maximizing nonnegative monotone submodular functions over uniform matroids, the greedy algorithm  gives a $(\efac \approx 0.632)$-approximation of the optimal solution.
Later, \cite{FisherNW78-2} showed that for optimizing over matroids, the approximation ratio of the greedy algorithm is ${1\over 2}$.
Recently,~\cite{CalinescuCPV08} proposed a better approximation algorithm with a ratio of $1-{1\over e}$. It also has been shown that this factor is optimal (in the value oracle model, where we only have access to the values of $f(S)$ for all the subsets $S$ of the ground set), if only a polynomial
number of queries is allowed (see~\citealt{NemhauserW78,Feige98}).

However, all these algorithms are designed for deterministic environments. In practice, one must deal with the stochasticity caused by the uncertain nature of the problem's parameters, the incomplete information about the environment, and so on. For instance, in welfare maximization, the quality of the resources may be unknown in advance, or in the capital budgeting problem some projects taken by an investor may fail due to unexpected events in the market. 
Yet another example is from viral marketing, where some people whom we target might not adopt the behavior (e.g., receive a coupon or a promotion but not purchase the product).
Also, in the environmental monitoring example, it is expected that a non-negligible fraction of sensors might not be functioning properly due to various reasons.

All these possibilities motivate the study of submodular maximization in the stochastic setting. In such settings, the outcome of the elements in the selected set are not known in advance and they will only be discovered after they are chosen.

\subsection*{Setting}
{Here, we provide a brief overview of our setting. Later in Section~\ref{sec:def}, we will discuss the model in detail. We study the problem of maximizing a monotone submodular function $f$ over a set of $n$ random variables, namely, $\A = \{X_1, X_2, \cdots, X_n\}$. 
A policy $\pi$ picks the elements one by one (perhaps, based on the realized value of the previous elements) until it stops. Once $\pi$ stops, the state of the world is a random vector $\Theta^{\pi} = (\theta_1, \theta_2, \cdots, \theta_n)$, where $\theta_j$ denotes the realization of $X_i$, if $i$ is picked by the policy, and is equal to $0$ otherwise.
The objective of stochastic submodular maximization would be to optimize the expected value of a policy, i.e., $\underset{\pi}{\textrm{Maximize}}~\E[f(\Theta^{\pi})]$, subject to feasibility.
We model the feasibility constraint using a matroid. For a given matroid $\M$ defined on the ground set of the aforementioned random variable set $\A$, a policy $\pi$ is called feasible if the subset of random variables it picks is always an independent set of $\M$.}
%
%
%

We consider both \textbf{adaptive} (general) and \textbf{non-adaptive} policies. In adaptive policies, at each point in time all the information regarding the previous actions of the policy is known. In other words, the policy has access to the actual realized value of all the elements it has picked so far. In contrast, non-adaptive policies do not have access to such information and should make their decisions (about which random variables to pick) before observing the outcome of any of them. 

{The main reason behind considering the notion of adaptivity is that 
they could be very hard, sometimes even impossible, to implement in practice due to various reasons. We will discuss this in more detail later, but to name a couple, it could be very costly to wait long enough to observe the outcome of the previous actions, or sometimes it is not possible to measure such outcomes accurately. On the other hand, non-adaptive policies are highly restricted and hence may perform poorly in terms of value compared to adaptive ones. To study this tradeoff between simplicity and value, we study the notion of \textbf{adaptivity gap}, which is defined as the ratio of the expected value of the optimal adaptive policy versus the expected value of the optimal non-adaptive one.}



\subsection*{Contributions}	\label{sec:results}
We show that the adaptivity gap of the stochastic monotone submodular maximization (SMSM) problem is equal to ${e \over e-1} \approx 1.59$. In other words, we prove that there exists a non-adaptive policy that achieves at least an ${e-1 \over e}$ fraction of the value of the optimal adaptive policy. 
We also provide an example to show that our analysis of the adaptivity gap is tight. For that, we use a simple instance of the stochastic max $k$-coverage problem. \upt{Furthermore, we show that the adaptivity gap can be unbounded if the objective function is submodular but non-monotone, see Section~\ref{apx:nonmonotone}.}

We will then present a generalization of the continuous greedy method of~\cite{CalinescuCPV08} and \emph{construct} a non-adaptive policy that approximates the optimal adaptive policy within a factor of $\tfrac{e-1}{e} - \epsilon$ for any arbitrary $\epsilon$. Our algorithm runs in polynomial time in terms of the size of the problem and $1/\epsilon$. This policy does not necessarily coincide with the optimal non-adaptive policy; however, due to the tightness of our adaptivity gap result, this is essentially the best approximation ratio that one could hope for with respect to the optimal adaptive policy.

In Section~\ref{sec:matroidpolicies}, we focus on  myopic policies. We study the natural extension of the myopic policy studied in \cite{FisherNW78-2} to a stochastic environment.
This policy iteratively chooses an element with the maximum \emph{expected} marginal value,
\emph{conditioned} on the outcome of the previous elements.
We show that the approximation ratio of this policy with respect to the optimal adaptive policy is ${1\over 2}$ for {\em general matroids}. In addition, we show that the approximation ratio of this adaptive myopic policy is ${1\over\kappa+1}$ if  the feasible domain is given by the intersection of $\kappa$ matroids.
We also prove that over a uniform matroid (i.e., subject to a cardinality constraint), the approximation ratio of this policy is $\efac$.
We will discuss the results of~\cite{NemhauserW78} and~\cite{Feige98} to show that the approximation ratio of $\efac$ is optimal if only polynomial algorithms are allowed. 

\upt{This paper settles the status of SMSM problem as presented in Table~\ref{table:results}.
The value of the optimal non-adaptive and the optimal adaptive policies are denoted by $\OPTNA$ and $\OPTA$, respectively. The $\oplus$ sign in the table means that the bound is exact, i.e., the lower bound and the upper bound coincide. Also, $\dagger$ means that the algorithm is optimum, i.e., it is computationally intractable to improve the approximation bound. Finally, $\ddagger$ means that our analysis of the algorithm is tight.}

\begin{table}
\begin{center}
\label{table:results}
\caption{Bounds for the Algorithms and the Adaptivity Gap}
\begin{tabular}{ |c | c | c | c |}
\hline
   & & & \\
\textsc{ \textbf{Constraint}} & \textsc{Non-adaptive} & \textsc{Myopic Adaptive} & \textsc{Adaptivity Gap }\\
  & & & \\ \hline \hline
  & & & \\
  \textsc{Uniform} & Myopic Policy & $\tfrac{e-1}{e} \OPTA$  $^\dagger$& \\
 \textsc{Matroid}& $\tfrac{e-1}{2e}\OPTA$ & 
[Theorem~\ref{thm:uniform}] &$\tfrac{e}{e-1}$  $^\oplus$ \\
\cline{1-1}\cline{3-3}
  &[Proposition~\ref{prop:myopic-nonadaptive}] & & [Theorem~\ref{thm:gap}] \\
  \textsc{General} &  & $\tfrac{1}{2} \OPTA$ $^\ddagger$&\\
   \textsc{Matroid}& Stochastic Continuous Greedy &($\tfrac{1}{\kappa+1} \OPTA$  for the  &\\
  &$\tfrac{e-1}{e} \OPTA$  $^\dagger$~~  &  intersection of $\kappa$ matroids)$^\ddagger$ &\\
  & [Theorem~\ref{thm:poly}] &[Theorem~\ref{halfapprox}] & \\ \hline
\end{tabular}
\end{center}
\vspace{1.5em}
\end{table}

\subsection*{Related Work} \label{sec:related_work}
\cite{GoemansV06} proposed adaptive and non-adaptive policies for the stochastic covering problem where the goal is to cover all elements of a target set using stochastic subsets with the minimum (expected) number of subsets. 
\cite{GuilloryB10} studied a similar set cover problem but they took a worst-case analysis approach. They showed that in the worst-case,  the adaptivity gap can be very large and they provide optimal (up to a constant factor) algorithms for their problem.

\upt{Since the publication of our preliminary results in~\cite{AsadpourNS08}, 
adaptive policies for other interesting notions of stochastic submodular maximization have been studied.
~\cite{GolovinK11} introduced 
the notion of {\em adaptive submodularity}.
Adaptive submodularity is a generalization of our setting in which the random variables of interest (namely, $X_1, X_2, \cdots, X_n$) are not necessarily independent. Instead, it suffices that the function $f$ satisfies diminishing conditional expected marginal returns over partial realizations.
They demonstrated the effectiveness and applicability of this notion over a wide range of important optimization problems by providing approximately optimal adaptive algorithms; see Section~\ref{apx:unbouned}.
Also, \cite{GolovinK11_2} 
provided examples to show that the adaptivity gap for certain covering problems (where the objective is to \emph{minimize} the cost) may not be constant.}

Very recently,~\cite{AdamczykSW14} studied the problem of stochastic submodular probing with applications in online dating, kidney exchange, and Bayesian mechanism design. This problem investigates a generalized notion of adaptive policies. In the model, a ground set of elements is given where every element is \emph{active} independently at random with some pre-known probability. A policy can probe elements sequentially, but if an element turns out to be active, it has to be included in the solution. The goal is, naturally, to find a feasible subset of active elements that maximizes the value of a submodular set function. The feasibility of the eventual subset of elements is determined by $k^{{in}}$ matroid constraints. Also, the subset of elements being probed should be feasible and obey $k^{{out}}$ matroid constraints. They provide $(1-1/e)/(k^{{in}}+k^{{out}}+1)$-approximation policies for this problem for $k^{{in}} \geq 0$ and $k^{{out}} \geq 1$. We note that as a complementary result to that paper, when $k^{in} = 1$ and $k^{out} = 0$, the non-adaptive myopic policy provided by our Proposition~\ref{prop:myopic-nonadaptive} provides a $(1-1/e)/(k^{{in}}+k^{{out}}+1) = \frac{e-1}{2e}$-approximation for the stochastic submodular probing problem. Also, the non-adaptive policy resulting from Theorem~\ref{thm:poly} provides a $(1 - 1/e - \epsilon)$-approximation in polynomial time in terms of the size of the problem and $1/\epsilon$.

Another closely related work to ours is by \cite{ChanF08} who studied a rather general stochastic depletion problem. Their optimization problem is defined over a time horizon where at each step a policy decides on an action. Each action stochastically generates a reward and depletes some of the resources. The goal is to maximize the expected reward subject to the resource constraints. They show that under certain ``submodularity" constraints, the myopic policy obtains at least half the value of the optimal (offline) solution {--- a benchmark stronger than the optimal online policy.
An important difference between the framework in \cite{ChanF08} and ours is the ordering imposed in their model on the sequence of actions a policy can take, whereas in our model, in order to prove our results we need to exploit the matroid properties. Although our proposed adaptive polices are not applicable to their setting, our non-adaptive policy can be applied to their setting using a partition matroid to obtain a $\left(1-{1\over e}\right)$-approximation with respect to the optimal adaptive (online) algorithms.}




\subsubsection*{Organization}
The rest of the paper is organized in the following way. In Section~\ref{sec:def} we formally introduce our problem and define adaptive and non-adaptive policies and the concept of adaptivity gap. In Section~\ref{sec:gap} we study the adaptivity gap and show that it is equal to $e/(e-1)$. In Section~\ref{sec:polytight} we provide a polynomial-time algorithm to find a non-adaptive policy that matches the adaptivity gap ratio. The simple myopic policies are investigated in Section~\ref{sec:matroidpolicies}, and worst-case performance guarantees have been provided for them. We discuss some natural variations of our model in Section~\ref{sec:discussion}. Finally, we conclude in Section~\ref{sec:conclusion} by mentioning some interesting directions for future research.

\section{Model} \label{sec:def}
In this section, we define our framework for stochastic submodular optimization.
Consider a ground set $\A = \{X_1, X_2, \cdots, X_n\}$ of $n$ \emph{independent} random variables.
Let $\Rnonneg$ denote the set of non-negative real numbers (including $0$).  
Each random variable $X_i$ is defined over a domain $\Omega_i \subseteq  \Rnonneg$, and its probability distribution function over $\Omega_i$ has density $g_i$.\footnote{Throughout this paper, we work with continuous probability distributions. All our theorems and their corresponding proofs directly apply to the probability distributions over discrete domains by using Dirac delta functions.}
Let $f: \R_+^n \rightarrow \Rnonneg$ be a {\em monotone submodular value function}.  In other words,
\begin{equation*}
\forall x, y \in \R_+^n: f(x \vee y) + f(x \wedge y) \leq f(x) + f(y),
\end{equation*}
where $x \vee y$ denotes the componentwise maximum and $x \wedge y$ the componentwise minimum of $x$ and $y$.
Note that submodular set functions are special cases of the above definition. This generalization to the continuous domain allows us to capture wider aspects of the problem. For instance, we can model \emph{partial} contributions from the elements. This could be useful in many of the applications that we have mentioned before. For example, in viral marketing, targeted people could have different levels of enthusiasm about the new product or technology. Or, in the sensor placement problem, some sensors in the network might not become fully functional and cover only a part of the area they were supposed to cover.

A {policy} --- formally defined below --- consists of a sequence of actions $(\pi_1, \pi_2, \cdots$). Each {\em action} corresponds to the selection of an element from $\A$ (or perhaps none, denoted by $\emptyset$, corresponding to the empty set). After an element is selected, its actual value is realized. The state of the problem $\Theta$ is represented by an $n$-tuple $(\theta_1, \theta_2, \cdots, \theta_n)$.
Here, $\theta_i = x_i$ if $X_i$ has been selected by the policy in any of the previous steps and its actual value is realized to be $x_i$; otherwise $\theta_i = \nill$. The symbol $\nill$, different from value zero, stands for null, representing that the corresponding element has not been selected yet. 

Let $\Omega := \prod_{i=1}^n(\Omega_i \cup \{\nill\})$ be the set of all possible states. Consider a policy $\pi$ where the state of the problem before taking the $j$-th action is given by $\Theta^{\pi}_{j-1} \in \Omega$. The $j$-th action, denoted by $\pi_j$, 
is a mapping from the current state to an element from set $\A \cup \{\emptyset\}$.
Before taking any action the state of the problem is $\Theta^{\pi}_0 = (\nill, \nill, \cdots, \nill)$.
For instance, if we choose to pick $X_3$ in our first action (i.e., $\pi_1 = X_3$) and its value is realized to be $x_3$, then the state will be $\Theta^{\pi}_1 = (\nill, \nill, x_3, \nill, \cdots, \nill)$. If at some step $j$ the action $\emptyset$ is selected, then the state of the problem will remain unchanged, i.e., $\Theta^{\pi}_{j} = \Theta^{\pi}_{j - 1}$. We can now formally define the notion of policy.

\begin{definition}[\textsc{Policy}] \label{def:policy}
A policy $\pi:\Omega \rightarrow [0,1]^n$ is a mapping from the state space $\Omega$ to a distribution over the elements in $\A \cup \{\emptyset\}$. Let $\Theta^{\pi}_{j-1} = (\theta_1, \theta_2, \cdots, \theta_n)$ represent the state of the problem before taking the $j$-th action. Then, $\pi(\Theta^{\pi}_{j-1})$ is a distribution over $\{X_k|\theta_k = \nill\} \cup \{\emptyset\}$.  If the policy is deterministic, $\pi(\Theta^{\pi}_{j-1})$ will correspond to exactly one element in $\{X_k|\theta_k = \nill\} \cup \{\emptyset\}$. The action of the policy, denoted by $\pi_j$, corresponds to choosing an element of $\{X_k|\theta_k = \nill\} \cup \{\emptyset\}$ according to distribution $\pi(\Theta^{\pi}_{j-1})$. Subsequently, if the policy chooses element $X_i\neq\emptyset$, the state of the problem after taking the $j$-th action will be $\Theta^{\pi}_{j} = (\theta_1, \theta_2, \cdots, \theta_{i - 1}, x_i, \theta_{i+1}, \cdots, \theta_n)$, where $x_i$ is the realized value of $X_i$  (drawn from distribution $g_i$). Finally, if at any point $\pi_j(\Theta^{\pi}_{j-1}) = \emptyset$, then the selection of the elements stops. {We denote by $\Theta^\pi$ the random $n$-tuple corresponding to the last state reached by the policy.}
%
\end{definition}



In the following example, we discuss the evolution of a simple policy defined on a ground set consisting of three elements.

\paragraph{Example:}
Suppose that the ground set of elements is $\{X_1, X_2, X_3\}$. Also, suppose that each element, once picked, will have a realized value of either 10 or 100 with probabilities $0.4$ and $0.6$, respectively. Consider a simple policy $\pi$ that in the first step picks element $X_3$. In the next step, $\pi$ will pick $X_1$ if the realized value of $X_3$ is realized to be 10. Otherwise, it picks either $X_1$ or $X_2$, each of which with probability $0.5$. After that, $\pi$ does not pick anything. Note that as we have explained before, the initial state of the problem is by definition $\Theta^{\pi}_{0} =(\nill, \nill, \nill)$. The first action of this policy is $\pi_1 = X_3$. After that, the state of the problem, i.e., $\Theta^{\pi}_{1}$, will be either $(\nill, \nill, 10)$ or $(\nill, \nill, 100)$ with probabilities $0.4$ and $0.6$, respectively. 
Now, if $\Theta^{\pi}_{1} = (\nill, \nill, 10)$, then the second action $\pi_2(\nill, \nill, 10)$ will be to choose $X_1$. Consequently, the state of the problem afterwards, i.e., $\Theta^{\pi}_{2}$, will be either $(10, \nill, 10)$ or $(100, \nill, 10)$ with probabilities $0.4$ and $0.6$, respectively. On the other hand, if $\Theta^{\pi}_{1} = (\nill, \nill, 100)$, then the second action will be a random one that picks either $X_1$ or $X_2$, each of which with probability $0.5$. In the first case, the state of the problem, i.e., $\Theta^{\pi}_{2}$, will be either $(10, \nill, 10)$ or $(100, \nill, 10)$ with probabilities $0.4$ and $0.6$, respectively. Similarly, in the second case, $\Theta^{\pi}_{2}$ will be either $(\nill, 10, 10)$ or $(\nill, 100, 10)$ with probabilities $0.4$ and $0.6$, respectively. 

From the discussion above, we conclude that for this specific policy $\pi$ we have
\begin{equation}
\label{Eq:example}
\Theta^{\pi}_{2} = \left\{\begin{array}{ll} (10, \nill, 10) & \mbox{w.p. $0.4 \times 0.4 = 0.16$}, \\ 
(100, \nill, 10) & \mbox{w.p. $0.4 \times 0.6 = 0.24$}, \\
(10, \nill, 100) & \mbox{w.p. $0.6 \times 0.5 \times 0.4 = 0.12$},\\
(100, \nill, 100) & \mbox{w.p. $0.6 \times 0.5 \times 0.6 = 0.18$}, \\
(\nill, 10, 100) &\mbox{w.p. $0.6 \times 0.5 \times 0.4 = 0.12$},\textrm{~and} \\
(\nill, 100, 100) & \mbox{w.p.  $0.6 \times 0.5 \times 0.6 = 0.18$}. \end{array} \right.
\end{equation}

After that, the policy stops. {Hence, $\Theta^{\pi}$ will be equal to $\Theta^{\pi}_{2}$.} 
$\hfill\square$
\medskip



Note that a policy sequentially picks new elements from the ground set $\A$ (perhaps, based on the outcomes of the previous selections). 
We emphasize that we only consider policies \emph{without} replacements (i.e., only elements whose corresponding entries in the current state are null can be selected). As a consequence, once the value of an element is realized, it will be fixed and cannot be changed by picking that element again. 
As we discuss in Section~\ref{sec:mult}, this assumption is only made to simplify the presentation, and all of our results hold without it.

The value of any state $\Theta = (\theta_1, \theta_2, \cdots, \theta_n)$, denoted by $\ff(\Theta)$, is equal to the value of function $f$ over those elements of $\A$ that already have been chosen. Formally,
\begin{equation}
\label{eq:value}
\ff(\Theta) := f(\zeta_1, \zeta_2, \cdots, \zeta_n), \textrm{~~~~~~where }
\zeta_i = \left\{\begin{array}{ll} \theta_i & ~~~\theta_i \neq \nill; \\ 0 & ~~~\theta_i = \nill. \end{array}\right.
\end{equation}

For instance, for a state $\Theta = (10, \nill, 100)$, the value of $f(\Theta)$ is equal to $f(10, 0, 100)$.
Finally, \emph{the expected value} of a policy $\pi$ after the $j$-th action is equal to $\E[\ff(\Theta^{\pi}_j)]$.
Here, the expectation is taken over all possible realizations of the state $\Theta^{\pi}_j$ as an outcome of the first $j$ actions taken by $\pi$. {Throughout this paper the parameter for the expectation will be clear from the context. We will make this parameter explicit wherever some confusion may arise. For instance, if needed, we can represent the aforementioned expectation by $\E_{\Theta}[\ff(\Theta^{\pi}_j)]$.} 

\begin{definition}[\textsc{Value of a Policy}]
The  value of a policy $\pi$ is defined as the expected value of its final state, i.e., $\E[\ff(\Theta^{\pi})]$. \end{definition}
%

In the above definition, the expectation is taken over all realization of $\Theta^{\pi}$. We refer the reader to the end of Definition~\ref{def:policy} for $\Theta^{\pi}$. Thus, for the example we discussed earlier, by following Eq.~(\ref{Eq:example}) we have
\begin{eqnarray*}
\E[\ff(\Theta^{\pi})] = \E[\ff(\Theta^{\pi}_2)] &=& 0.16f(10, 0, 10) + 0.24f(100, 0, 10) + 0.12f(10, 0, 100)\nonumber \\
&& +~ 0.18f(100, 0, 100) + 0.12f(0, 10, 100) + 0.18f(0. 100, 0). \nonumber
\end{eqnarray*}

Throughout this paper, we assume that all such expected values are finite. Due to the monotonicity of $f$, it is enough to assume that $\E[\ff(\Theta^{\pi})]$ is finite for a policy $\pi$ that picks all the elements in $\A$. 

Finally, we take into account the constraints that might appear in our optimization problems.
\begin{definition}[\textsc{A Feasible Policy}]
Consider a matroid $\M(\A, \I)$ defined on a ground set $\A = \{X_1, X_2, ... , X_n\}$ of $n$ random variables.
A policy $\pi$ is feasible with respect to $\M$ if and only if with probability $1$ the set of elements chosen by $\pi$ belongs to $\I$. {We denote by $\Pi(\A,\I)$ the set of all such feasible policies.} 
\end{definition}

Now, we are ready to formally define the problem of maximizing a stochastic monotone submodular function in the presence of a matroid feasibility constraint.

\begin{definition}[SMSM]
{\textbf{\em Maximizing a Stochastic Monotone Submodular Function with Matroid Constraint:}}
{An instance of the SMSM problem is defined with tuple $\big(f,\A,\I\big)$, where  $\A = \{X_1, X_2, \cdots, X_n\}$ is a set of $n$ non-negative independent random variables, $f: \R_+^n \rightarrow \Rnonneg$ is a monotone submodular value function, and matroid $M(\A,\I)$ represents the constraints. 
The goal is to find a feasible policy (with respect to $\M$) that obtains the maximum (expected) value, {i.e., 
\textbf{$\max_{\pi \in \Pi(\A, \I)} \E[f(\Theta^{\pi})].$}}}
\end{definition}
\medskip

In addition to ``adaptive" policies, we also consider \emph{non-adaptive} policies that do not rely on the outcomes of the previous actions in order to make a decision.
When a policy progresses, more information about the actual realization of the state will be revealed.
Non-adaptive policies do not use such information and choose all the elements in advance.

\begin{definition}[Non-Adaptive Policies] 
A {policy} $\pi$ is called {\em non-adaptive} if, for each state $\Theta$, $\pi(\Theta)$ depends only on which elements were previously chosen but {\em not} their realizations. A non-adaptive policy is defined by the set (and not the order) of the chosen elements.
\end{definition}
%

Since non-adaptive policies choose their set of actions in advance (not sequentially), they are usually simpler to implement. Furthermore, sometimes it is not possible to implement an adaptive policy or it might be difficult to learn the realizations of the random variables. For instance, in the capital budgeting problem, it might be costly for the investor to wait for outcomes of the projects to be realized before making another decision. In the case of viral marketing, it is difficult to get feedback from each individual and find out about the outcome of the previous actions.

However, one drawback of non-adaptive policies is that they may not perform as well as the adaptive policies. The optimal policy for a given instance of the SMSM problem might be a complicated adaptive policy, far from being non-adaptive. Hence, there exists a trade-off between the value obtained by the policy on one side and its practicality and convenience (in terms of both computation and  implementation) on the other side. However, contemplating between these two options is reasonable for strategists only if the performance of the non-adaptive policies is not too far from that of the adaptive ones.
This observation motivates the comparison between the value of adaptive and non-adaptive policies in SMSM problems. This can be measured via studying the \emph{adaptivity gap} as defined by~\cite{DeanGV05, DeanGV04}.
\begin{definition}[Adaptivity Gap] \label{def:gap}
The adaptivity gap is defined as the upper bound on the ratio of the value of the optimal adaptive policy to the value of the optimal non-adaptive policy. More precisely, 
$$\textrm{\textsc{Adaptivity Gap}} = \sup_{(f,\A,\I)}  \frac{\sup_{\pi \in \Pi(\A,\I)} \E_{\Theta}[\ff(\Theta^{\pi})]}{\sup_{S\in\I} \E_{\Theta}[\ff(\Theta_S)]},$$
where the first supremum is taken over all instances $(f,\A,\I)$ of the SMSM problem, and for $S\in I$ the vector $\Theta_S$ corresponds to the outcome of the non-adaptive policy that picks set $S$ of the elements.
\end{definition}


\section{The Adaptivity Gap of SMSM}  \label{sec:gap}
It is easy to observe from Definition~\ref{def:gap} that the adaptivity gap is at least equal to $1$ simply because non-adaptive policies are a special case of adaptive (general) policies. However, a high adaptivity gap for a problem suggests that good adaptive policies are far superior than the non-adaptive ones. Similarly, a low (close to 1) adaptivity gap is a certificate that one can get the benefits of non-adaptive policies (such as the relative simplicity in their optimization and also implementation) without losing much in the obtained value compared to that of the adaptive policies. 
In this section, we show that for the SMSM problem, the adaptivity gap is bounded.
The main result of this section is as follows:

%
\begin{theorem}[Adaptivity Gap] \label{thm:gap}
The adaptivity gap of SMSM is equal to $\frac{e}{e-1}$.
\end{theorem}

In the rest of this section, we prove the above theorem by analyzing  the optimal non-adaptive and adaptive policies and compare their performances. We start with an example that show that the above ratio is tight.

\subsection{A Tight Example: Stochastic Maximum $k$-Coverage} \label{sec:tightexample}
%
Given a collection $\F$ of the subsets of $\{1,2,\cdots,m\}$,
the goal of the {\em maximum $k$-coverage} problem is to find $k$ subsets from $\F$ such that their union has the maximum cardinality (see~\cite{Feige98}).
In the stochastic version, each random variable $X_i$ with some probability covers a subset of $\{1,2,\cdots,m\}$ and with the remaining probability covers no element 
(i.e., the realization of $X_i$ would be the empty set). 
The subset that an element of $\F$ would cover is revealed only after choosing the element
according to a given probability distribution.
It is easy to see that  this problem is a special case of the SMSM
with respect to a uniform matroid $\M = (\F, \{S \subseteq \F: |S| \leq k\})$ constraint. 

To give a lower bound on the adaptivity gap,
consider the following instance: a ground set $\{1, 2, \cdots, m\}$ and a
collection $\F = \{X^{(i)}_j|1 \leq i \leq m, 1 \leq j \leq m^2\}$ of its subsets are given. Note that here $n$, the total number of elements in the ground set, is equal to $m^3$. For every $i, j$, define $X^{(i)}_j$ to be the one-element subset $\{i\}$ with probability ${1\over m}$ and the empty set with probability $1 - {1\over m}$.
The goal is to cover the maximum number of the elements of the ground set by selecting at most $k := m^2$ subsets from $\F$. We denote this instance by SMSM1($m$).
\begin{lemma}
\label{lem:example_nonadaptive}
The optimal non-adaptive policy for SMSM1($m$) is to pick $m$ subsets from each of the collections $\F^{(i)} = \{X^{(i)}_j|1 \leq j \leq m^2\}$ for every $i$. The expected value of this policy is $\left(1-{1\over e} - o(1)\right)m$.
\end{lemma}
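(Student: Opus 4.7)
The plan is to reduce the maximization over all non-adaptive policies to a one-dimensional allocation problem and then apply a standard concavity-and-smoothing argument. A non-adaptive policy is a fixed subset $S \subseteq \F$ with $|S| \le k = n^2$, and since within each collection $\F^{(i)}$ the variables $X^{(i)}_1,\dots,X^{(i)}_{n^2}$ are i.i.d., the joint distribution of the realized sets depends only on the counts $m_i := |S \cap \F^{(i)}|$, subject to $\sum_{i=1}^n m_i \le n^2$. So it suffices to optimize over the tuple $(m_1,\dots,m_n)$ of non-negative integers.

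Next I would compute the expected coverage in closed form. For each fixed $i$, element $i \in N$ fails to be covered exactly when all $m_i$ chosen variables from $\F^{(i)}$ realize to $\emptyset$, which has probability $(1-1/n)^{m_i}$. Independence across collections then gives expected coverage
\begin{equation*}
\sum_{i=1}^{n} \bigl[\,1 - (1-1/n)^{m_i}\,\bigr].
\end{equation*}

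The problem now is to maximize this sum over non-negative integers with $\sum_i m_i \le n^2$. The function $h(m) := 1 - (1-1/n)^m$ is strictly increasing (so the budget may be saturated at $\sum_i m_i = n^2$) and strictly concave on the integers, since a direct calculation gives $h(m+2) + h(m) - 2h(m+1) = -(1-1/n)^m / n^2 < 0$. A standard smoothing argument then applies: whenever $m_i \ge m_j + 2$, shifting one unit from $i$ to $j$ strictly increases the sum, so at the optimum the $m_i$ differ by at most one; combined with $\sum_i m_i = n^2 = n \cdot n$, this forces $m_i = n$ for every $i$. Substituting back, the expected value equals $n\bigl[\,1-(1-1/n)^n\,\bigr]$, which tends to $n(1-1/e) = (\efac)\,n$ as $n \to \infty$, and in fact exceeds $(\efac)n - o(1)$ for all large $n$ since $(1-1/n)^n$ increases to $1/e$.

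The entire argument is essentially one-variable calculus once the reduction to counts is made; the only place requiring real care is justifying that reduction, which rests on the i.i.d.\ structure within each $\F^{(i)}$ and the independence across $i$ (so that neither the identities of the chosen $j$'s nor any correlation between collections matters). Given those symmetries, both parts of the claim — that uniform allocation is optimal and that its value is asymptotically $(\efac)n$ — fall out immediately from the concavity of $h$.
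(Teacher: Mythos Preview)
Your proof is correct and follows essentially the same route as the paper: reduce to the counts $m_i = |S \cap \F^{(i)}|$, write the expected coverage as $\sum_i \bigl[1-(1-1/n)^{m_i}\bigr]$, and use concavity of $m \mapsto 1-(1-1/n)^m$ under the constraint $\sum_i m_i = n^2$ to conclude the optimum is at $m_i = n$ with value $n\bigl[1-(1-1/n)^n\bigr] \to (1-1/e)n$. Your version is slightly more careful (explicitly justifying the reduction to counts via the i.i.d.\ structure, saturating the budget via monotonicity, and spelling out the discrete smoothing step), but the argument is the same.
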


{Therefore, for large enough values of $m$, the expected value of the non-adaptive policy above is arbitrarily close to $(\efac)m$.}  The proof is given in the online appendix and is based on a convexity argument. 

We now consider the following myopic adaptive policy $\pi$:
start with $i = 1$ and pick the elements of $\A^{(i)}$ one by one until one of them is realized as $\{i\}$ or all of elements in $\A^{(i)}$ are chosen. Then increase $i$ by one. Continue the iteration until either all the elements are covered or $m^2$ subsets from $\A$ are selected. 
The following lemma gives a lower bound on the number of elements in the ground set covered by the adaptive policy. The proof is given in the online appendix.

\begin{lemma}
\label{lem:example_adaptive}
The expected number of elements in $\{1,2,\cdots,m\}$ covered by $\pi$ described above is $(1 - o(1))m$.
\end{lemma}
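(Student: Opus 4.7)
My plan is to recast the execution of $\P$ as a sequence of i.i.d.\ Bernoulli trials, thereby reducing the expected coverage to a binomial computation.

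First, I would couple all the random draws the policy might ever make into a single i.i.d.\ sequence $\xi_1,\xi_2,\ldots$ of Bernoulli$(1/n)$ variables, where $\xi_t=1$ encodes that the $t$-th sampled subset realizes as the target singleton rather than the empty set. This is valid because every $X^{(i)}_j$ independently equals its singleton with probability $1/n$. In this language, $\P$ scans the stream, treats each $1$ as ``the current element is covered, move to the next,'' and halts once $n^2$ trials have been used or all $n$ elements have been processed.

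The key step is to show that the number of covered elements equals $\min(K,n)$, where $K:=\xi_1+\cdots+\xi_{n^2}\sim\mathrm{Bin}(n^2,1/n)$. Each $1$ in the first $n^2$ trials terminates one iteration successfully and covers one new element, so coverage is at least $\min(K,n)$; conversely, each covered element produces a $1$ inside this window, so coverage is at most $\min(K,n)$. The per-iteration cap of $n^2$ samples never complicates the coupling, because an iteration that saturates its cap has by itself already exhausted the entire global budget of $n^2$ trials. This reduction is the main technical observation; once it is in place, the rest is routine.

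Finally, since $\E[K]=n$ and $\mathrm{Var}[K]=n(1-1/n)$, the identity $\E[(K-n)^+]=\E[(n-K)^+]=\tfrac12\E[|K-n|]$ (using $\E[K-n]=0$) combined with Cauchy--Schwarz gives
\[
\E[\min(K,n)] \;=\; n-\E[(n-K)^+] \;\ge\; n - \tfrac12\sqrt{\mathrm{Var}[K]} \;\ge\; n - \tfrac12\sqrt{n-1}.
\]
Dividing by $n$ yields the expected coverage of $(1-o(1))\,n$ claimed by the lemma. The only subtlety I anticipate is justifying the ``coverage $=\min(K,n)$'' identification under the joint effect of the per-iteration cap and the global budget; the remainder is a one-line concentration estimate for a binomial.
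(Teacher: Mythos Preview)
Your proof is correct and follows essentially the same route as the paper: both couple the policy's picks to an i.i.d.\ Bernoulli$(1/n)$ sequence and identify the coverage with $\min(K,n)$ for $K\sim\mathrm{Bin}(n^2,1/n)$. The only difference is in the final concentration step, where the paper applies a Chernoff bound to get $\Pr[K\le n-n^{2/3}]\le e^{-n^{1/3}}$, whereas you use the second-moment estimate $\E[(n-K)^+]\le\tfrac12\sqrt{\mathrm{Var}[K]}$; your argument is slightly more elementary and in fact yields the sharper error term $O(\sqrt{n})$ rather than the paper's $O(n^{2/3})$.
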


By combining the results of Lemmas~\ref{lem:example_nonadaptive} and~\ref{lem:example_adaptive},
we have that the adaptivity gap of SMSM1($m$) is at least $\tfrac{e}{e - 1}$.


\subsection{Bounds on the Optimal Non-Adaptive Policy} \label{sec:bounds_non_adapt}
In this section, we study the optimal non-adaptive policy in more detail.
We start with some definitions.
Let $S \subseteq \A$ be a subset of variables.
Also, let vector $\Theta_S = (\theta_1, \cdots, \theta_n)$ denote the {realization} of set $S$,
where $\theta_i = x_i$ for $X_i\in S$ and $\theta_i = \nill$ for $X_i\notin S$.
The value obtained by choosing set $S$ after the realization is equal to $\E[\ff(\Theta_S)]$.
Let $g_i$ and $G_i$ be the probability density function (p.d.f.) and cumulative distribution function (c.d.f.) of random variable $X_i$, respectively.
We introduce $g_S:\R_+^n \rightarrow \R$
to represent the probability density function of observing $\Theta = (\theta_1, \cdots, \theta_n)$ while selecting $S$:
\begin{eqnarray*}
g_S(\Theta) := \Pr[\Theta_S = \Theta] = \prod_{i:X_i\in S} g_i(\theta_i),
\end{eqnarray*}
where $g_S(\theta)$ is defined to be zero if there exists $j\notin S$ such that $\theta_j \neq \nill$.
Note that the definition above relies on the assumption that the realizations of elements chosen by the policy are independent from each other. As we show in Section~\ref{apx:unbouned},
the adaptivity gap can be arbitrarily large if this assumption does not hold.

Now, for the sake of simplicity in notation, we define function $F: 2^{\A} \rightarrow \R^+$ as the expected value
obtained by choosing set $S$, i.e.,
\begin{eqnarray}
\label{eq:F} F(S) = \E[\ff(\Theta_S)] = \int_{\Theta \in \Omega}\ff(\Theta) g_S(\Theta) d\Theta,
\end{eqnarray}
where, as defined before, $\Omega = \prod_{i = 1}^n (\Omega_i \cup \{\nill\})$ is the set of all possible states. Therefore, the optimal non-adaptive policy for SMSM is equivalent to choosing
a set $S\in \I$ that maximizes $F(S)$ with respect to the desired matroid constraint. Hence, the following proposition holds:

\begin{proposition}[Optimal Non-Adaptive Policy] \label{prop:non-adapt}
For any instance of SMSM, each set $S^{\star} \in \mbox{\em argmax}_{S\in\I} F(S)$ corresponds to an optimal non-adaptive policy.
\end{proposition}

In the online appendix we prove the lemma below that shows  $F$ is monotone and submodular.

\begin{lemma}[Properties of Function $F$]  \label{lem:submodularity} 
If $f: \R_+^n \rightarrow \Rnonneg$ is monotone and submodular, then function $F: 2^{\A} \rightarrow \R^+$, defined in {\em Eq.~$($\ref{eq:F}$)$}, is a monotone and submodular {\em set function}.
\end{lemma}

We now define the notion of {\em fractional non-adaptive policies.}  A fractional non-adaptive policy ${\cal S}_y$ is defined with a vector $y \in [0,1]^n$. Policy ${\cal S}_y$ chooses elements in the (random) set $Y$; a set that includes each $X_i\in\A$ with probability $y_i$, independently for every $i$. 

We call fractional non-adaptive policy ${\cal S}_y$ {\em feasible in expectation} if vector $y$ lies in the base polytope of $\M$ denoted by $\B(\M)$.  The base polytope of $\M$ is the convex hull of the characteristic vectors of all bases of $\M$, i.e.,
\begin{equation} \label{eq:bm}
\B(\M) = \textrm{conv}\{1_S | S\in\I, \textrm{ and } S \textrm{ is a basis} \}.
\end{equation}

For instance, suppose $\M$ is a uniform matroid of rank $d$, and for $y = (y_1,y_2,\cdots,y_n)$ we have $\sum_{i=1}^n y_i \le {d}$. Observe that in this case,  fractional non-adaptive policy ${\cal S}_y$ is feasible in expectation, and the expected number of elements chosen by this policy is at most $d$.

With slight abuse of notation, we denote the expected value obtained by fractional non-adaptive policy ${\cal S}_y$ by $F(y)$. Namely, 
\begin{eqnarray} \label{expancion}
 F(y) :=  \sum_{Y \subseteq \{0, 1\}^n} \left[\left(\prod_{i \in Y} y_i \prod_{i \notin Y} (1 - y_i) \right) F(Y)\right],
\end{eqnarray}
where as defined in Eq.~(\ref{eq:F}), we have $F(Y) =  \E_{\Theta}[\ff(\Theta_Y)]$. 

Fractional non-adaptive polices extend the space of (integral) non-adaptive polices. Therefore, it is easy to see that $\max_{S\in\I} F(S) \le \max_{y\in\B(\M)} F(y)$. We show that this inequality is in fact tight, and any vector $y\in \B(\M)$ can be rounded to an integral corner point $Y \in \B(\M)$ corresponding to a subset $S\in\I$ such that $F(S) \ge F(y)$. \upt{Note that Lemma~\ref{lem:submodularity} shows that function $F(Y)$ used in Eq.~(\ref{expancion}) is a \emph{submodular set function}. Hence, $F(y)$ is the same as the ``multilinear extension'' that has been defined by~\cite{CalinescuCPV08} to study the problem of maximizing (general deterministic) submodular functions. Therefore, by applying the pipage rounding procedure of~\cite{CalinescuCPV08} to function $F(y)$ we immediately obtain the following lemma.}

\begin{lemma}[Fractional vs. Integral Non-Adaptive Policies] \label{lem:fractional_non_adapt}
For any $y\in\B(\M)$, there exists a set $S\in\I$ such that $F(S) \ge F(y)$. Moreover, such a set can be found in polynomial time.
\end{lemma}

The above lemma implies that $\max_{y \in \B(\M)} F(y)$ is a (tight) \emph{lower bound} on the performance of the optimal non-adaptive policy. 

\subsection{Bounds on the Optimal Adaptive Policy}

We start this section by making a few observations about adaptive policies. Consider an arbitrary adaptive policy \textsc{Adapt}. Any such policy decides to choose a sequence of elements where the decision on which element to choose at any step might depend on the realized values of the previously chosen elements. {Therefore, any specific state of the world will result in a distribution over the sequence of elements that will be chosen by \textsc{Adapt}.}\footnote{We refer the reader to Definition~\ref{def:policy} for policy and emphasize that we allow our policies to be \emph{randomized}. This is the reason that we mention a distribution over different sequences of elements (and not just a specific sequence) here.}
 Any adaptive policy can be described by a (possibly randomized) decision tree in which at each step an element is being added to the current selection.
Each path from the root to a leaf of this tree corresponds to a subset of elements and occurs with some certain probability.
{These probabilities, covering all the possible scenarios for \textsc{Adapt}, sum to one.}
Let $y_i$ be the probability that element $X_i\in\A$ is eventually chosen by \textsc{Adapt}.
Also, let $\beta_\Theta$ be the probability that the final state $\Theta$ is reached by \textsc{Adapt}. Then we have the following properties:
\medskip

\begin{enumerate}
\item $\displaystyle \int_{\Theta\in\Omega} \beta_{\Theta}d\Theta = 1$.
\medskip
\item $\forall \Theta \in \Omega: \beta_{\Theta} \geq 0$.
\medskip
\item $\displaystyle  \forall i, x_i\in \Omega_i: \int_{\Theta: \theta_i  = x_i} \beta_{\Theta} d \Theta^{-i} = y_i g_i(x_i)$,
\medskip
\end{enumerate}

where $d\Theta^{-i}$ represents $\prod_{j \neq i} d\theta_j$. The first two properties hold because $\beta$ defines a probability measure on the space of all feasible outcomes.
The third property implies that the probability that we observe an outcome $x_i$ as a realized value of $X_i$ among all possible states $\Theta$ reached by the policy
is equal to the probability that $X_i$ is chosen (i.e., $y_i$) multiplied by the probability that the realization is equal to $x_i$.
We emphasize that we use the independence among the random variables to ensure that this property holds. We also emphasize that, as we will see in Section~\ref{apx:unbouned}, the adaptivity gap could be unbounded if there are dependencies among the variables.

Since every policy satisfies the above properties,
we can establish an upper bound on the value of any adaptive policy.
Hence, we define the function $f^+:[0,1]^n \rightarrow \mathbb{R}$ as follows:
\begin{equation}
\label{extensionintegral}
\displaystyle  f^+(y) := \sup_{\alpha}\left\{\int_{\Theta} \alpha_{\Theta} \ff(\Theta)d \Theta: \displaystyle \int_{\Theta} \alpha_{\Theta}d \Theta = 1, \alpha_{\Theta} \geq 0, \forall i, x_i\in\Omega_i: \int_{\Theta: \theta_i = x_i} \alpha_{\Theta} d\Theta^{-i} = y_i g_i(x_i)\right\}.
\end{equation}
The supremum is taken over all probability measures $\alpha$ that satisfy the three properties above.

Another observation is that for an optimal adaptive policy, vector $y$ lies in the base polytope of $\M$; see Eq.~(\ref{eq:bm}). Using these observations, we obtain the following lemma which is proved in the appendix:

\begin{lemma}[An Upper Bound on Adaptive Policies] \label{adaptiveupperbound}
The expected value of the optimal adaptive policy is at most $\sup_{y \in \B(\M)}\{f^+(y)\}$. 
\end{lemma}

%

We are now ready to prove Theorem~\ref{thm:gap}.

\subsection{Proof of Theorem~\ref{thm:gap}}
Lemma~\ref{adaptiveupperbound} shows that $\sup_{y \in \B(\M)} f^+(y)$ is an upper bound on the performance of the optimal adaptive policy.
Now consider any $y \in \B(\M)$. Below, we present Lemma~\ref{lem:upperbound}, which shows $\left(1-{1\over e}\right) f^+(y) \le  F(y)$. On the other hand, Lemma~\ref{lem:fractional_non_adapt} implies that there exists a set $S \in \I(\M)$ such that $F(y)\le F(S)$. Recall that $F(S)$ is in fact the expected value gained by a non-adaptive policy that selects set $S$. Hence, for every $y \in \B(\M)$, there exists a non-adaptive policy that achieves a value of at least $(1 - \frac{1}{e})f^+(y)$. Therefore, the optimal non-adaptive policy corresponding to selecting set $S^* \in \argmax_S \{F(S)\}$ has a value of at least  $(1 - \frac{1}{e}) \sup_{y \in \B(\M)}f^+(y)$, or equivalently, at least a $(1-1/e)$ fraction of the value of the optimal adaptive policy.
We remind the reader that the example in Section~\ref{sec:tightexample} shows that the factor is tight.

\begin{lemma}[Upper Bound on $f^+$]\label{lem:upperbound}
For any monotone submodular function $f$ and any vector $y \in \B(\M)$, we have $f^+(y) \le (\tfrac{e}{e-1}) F(y).$
\end{lemma}

The main technical ingredient in the proof of the lemma above is a generalization of Lemmas 3.7 and 3.8 in~\cite{VondrakThesis07} to continuous submodular functions for which we provide a new stochastic dominance result.

\begin{proof}{Proof of Lemma~\ref{lem:upperbound}}
%
We start with introducing the following notation. For a vector $\Theta = (\theta_1, \theta_2, \cdots, \theta_n)$, define $\Theta^{\uparrow j} = (\theta^{\uparrow j}_1, \theta^{\uparrow j}_2, \cdots, \theta^{\uparrow j}_n)$ as a {random} vector whose entries are defined as below.
\begin{equation} \label{eq:magic}
\theta^{\uparrow j}_i := \left\{\begin{array}{ll} \theta_i & ~~~i \neq j; \\ \max\{\theta_j, X\} & ~~~i = j, \textrm{~~where~$X$ is independently drawn from $g_j$}. \end{array}\right.
\end{equation}
One can think of $\Theta^{\uparrow j}$ as the same as $\Theta$, except for the $j$-th entry for which we draw a number $X$ from the distribution $g_j$ and override the entry with $X$ if and only if it increases the value of the entry.

Now, for any $j$ we define an independent Poisson clock $\C_j$ that sends signals with rate $y_j$ throughout the time. We start with $\Theta = (0, 0, \cdots, 0)$ at $t=0$. Once a clock $\C_j$ sends a signal, we replace $\Theta$ by $\Theta^{\uparrow j}$. By abuse of notation, we denote
the value of vector $\Theta$ at time $t$ by $\Theta(t)$.
We first show that $\E[\ff(\Theta(1))] \leq F(y)$. We will do so by proving a stochastic dominance result. 

Consider $F(y)$, defined in Eq.~(\ref{expancion}). Note that each entry $j$ in $\Theta_Y$ is zero (null) if $j \notin Y$. Otherwise, it is drawn from the probability distribution $g_j$, independently from other variables. 
Hence, $F(y)$ can be written as $E[f(\theta_1, \theta_2, \cdots, \theta_n)]$, where the entries are all independent and the c.d.f. of $\theta_j$ is given by a function $\eta_j:\R_+ \mapsto [0,1]$ and $\eta_j(x) = (1- y_j) + y_j G_j(x)$.

On the other hand, due to the independence of the Poisson clocks, the entries of $\Theta(1)$ are independent random variables. In particular, the clock $\C_j$  signals $k$ times between $t=0$ and $t=1$, where $k$ is a Poisson random variable with rate $y_j$. If this clock signals $k$ times, by the construction of $\Theta(t)$ for $0 \leq t \leq 1$, the $j$-th entry of $\Theta(1)$ will be the maximum of $k$ random variables, each drawn independently from the probability distribution $g_j$. In this case, the c.d.f. of the $j$-th entry will be simply given by the function $G_j(x)^k$. Summing over all $k$ and incorporating the Poisson distribution function, we can summarize that the c.d.f. of the $j$-th entry of $\Theta(1)$ is given by a function $\gamma_j:\R_+ \mapsto [0,1]$, where $\gamma_j(x) = \sum_{k=0}^{\infty} \tfrac{y_j^k}{k!}e^{-y_j}G_j(x)^k$. 

By the properties of Poisson distribution, we have 
\begin{eqnarray*}
\gamma_j(x) & = & \sum_{k=0}^{\infty} \frac{y_j^k}{k!} e^{-y_j} G_j(x)^k = \frac{e^{-y_j}}{e^{-y_j G_j(x)}} \sum_{k=0}^{\infty} \biggl(\frac{(y_j G_j(x))^k}{k!} e^{-y_j G_j(x)} \biggl) = \frac{e^{-y_j}}{e^{-y_j G_j(x)}} = e^{-y_j(1-G_j(x))} 
\end{eqnarray*}

However, comparing the two cumulative distribution functions discussed above, we have 
\begin{eqnarray*}
\gamma_j(x) & = & e^{-y_j(1-G_j(x))}  \ge  1 - y_j(1-G_j(x)) = \eta_j(x).
\end{eqnarray*}

This means that for every $j$, the random variable drawn from $\gamma_j$ is stochastically dominated by that from $\eta_j$. Therefore,  
\begin{equation}
\label{firstneq}
\E[\ff(\Theta(1))] \leq F(y).
\end{equation}

Now, we compare the value $\E[\ff(\Theta(1))]$ with $f^+(y)$. Let $t \in [0, 1)$ be fixed. For each $j$, the chance that the clock $C_j$ sends a signal during the interval $[t, t+dt)$ is simply $y_j dt$ for sufficiently small $dt$. Hence, 
\begin{equation}
\label{secondneq}\E[\ff(\Theta(t + dt)) - \ff(\Theta(t))~ |~ \Theta(t) = \Theta] = \sum_{j = 1}^n  y_j dt \left(\E[f(\Theta^{\uparrow j})] - f(\Theta)\right).
\end{equation}
Note that the expectation is taken over the random draw that appears in the definition of $\Theta^{\uparrow j}$, i.e., the random variable $X$ in Definition~(\ref{eq:magic}). 

We define an auxiliary function $f^*:[0,1]^n \rightarrow \mathbb{R}$ as the following:
\begin{equation}\label{eq:defstar}
f^*(y) := \inf_\Theta\left\{\ff(\Theta) + \sum_{j = 1}^{n} y_j \Big(\E\left[f(\Theta^{\uparrow j})\right] - f(\Theta)\Big)\right\},
\end{equation}
where the infimum is taken over all possible states $\Theta$. Therefore, the right hand side of Eq.~(\ref{secondneq}) is at least $(f^*(y) - \E[\ff(\Theta)])dt$.
As a result, the following bound can be derived on the derivative of $\E[\ff(\Theta(t))]$:
\begin{eqnarray} \label{eq:derivative}
\frac{d}{dt}\E[\ff(\Theta(t))] = \frac{1}{dt}\E[\ff(\Theta(t + dt)) - \ff(\Theta(t))~ |~ \Theta(t) = \Theta] &\geq& f^*(y) - \E[\ff(\Theta(t))].
\end{eqnarray}
Solving the differential equation above we obtain $\E[\ff(\Theta(t))] \geq (1 - e^{-t})f^*(y)$. In particular, 
\begin{equation}
\label{thirdneq}
\left(1 - \frac{1}{e}\right)f^*(y) \le \E[\ff(\Theta(1))].
\end{equation}

In Appendix~\ref{app:gap}, Lemma~\ref{lem:fplusstar}, we show that $f^+(y)\le f^*(y)$, $y\in\B(\M)$.  
Combining with Inequalities~(\ref{firstneq}) and~(\ref{thirdneq}), we have $f^+(y) \leq \tfrac{e}{e-1}F(y)$, which concludes the proof of the lemma.
\qed\end{proof}

\section{A Polynomial-Time $(1- 1/e - \epsilon)$-Approximate Non-Adaptive Policy}\label{sec:polytight}
In this section, we present polynomial-time non-adaptive policies for the SMSM problem. We assume that the values of function $F$ are accessible via an oracle.
We discuss the computation of function $F$ in Section~\ref{sec:f_computation}. We show that under standard assumptions (when $g_j$'s are bounded or constant Lipschitz continuous), the value of function $F$ can be computed within any polynomially small error term using sampling.

Let us first consider the following myopic non-adaptive policy formally defined in Figure~\ref{fig:nonadaptivepolicy}. This policy, at each step, adds a feasible element with the highest marginal contribution to the set of selected elements. Recall that using Lemma~\ref{lem:submodularity}, we showed that function $F$ is monotone submodular. 

The classic result of \cite{FisherNW78-2} implies that the greedy algorithm obtains an approximation ratio of ${1\over 2}$ for maximizing monotone submodular functions, i.e., at the end of the algorithm $F(S_i) \ge {1\over 2} \max_{S\in\I} \{F(S)\}$. 
Therefore, using Theorem~\ref{thm:gap}, we have

\begin{proposition}[Myopic Non-Adaptive Policy] \label{prop:myopic-nonadaptive}
The myopic non-adaptive policy for any instance of SMSM obtains an expected value of at least ${1\over 2}\times (1-{1\over e}) \approx 0.316$ times the expected value of the optimal adaptive policy.
\end{proposition}

\def\i{\hspace*{7mm}}
\begin{figure}[!t]
\caption{The Non-Adaptive Myopic Policy}
\begin{center}
\fbox{\parbox{5in}{ \vspace*{2mm} 
\label{fig:nonadaptivepolicy}

\begin{description}
\item\i\i Initialize $i = 0, S_{0} = \emptyset, U_{0} = \emptyset$.
\item\i\i Repeat
\item\i\i\i $i \leftarrow i + 1$. 
\item\i\i\i   Find $X_i\in\argmax_{X \in \A \setminus (U_{i-1} \cup S_{i-1})} \E[F(S_{i-1}\cup \{X_i\})]$.
\item\i\i\i   If $S_{i-1} \cup \{X_i\} \in \I$, then
\begin{description}
\item\i\i\i\i $S_{i} \leftarrow S_{i-1} \cup \{X_i\}$; $U_{i} \leftarrow U_{i-1}$.
\end{description}
\item\i\i\i   else
\begin{description}
\item\i\i\i\i $U_{i} \leftarrow U_{i-1} \cup \{X_i\}$; $S_{i} \leftarrow S_{i-1}$.
\end{description}
\item\i\i Until $(U_{i} \cup S_{i} = \A)$. 
\item\i\i The non-adaptive policy selects set $S_i$.
\end{description} 
}
\medskip
}
\end{center}
\medskip
\end{figure}


The above result provides a performance guarantee for a simple policy; however, it does not match the adaptivity gap. Theorem~\ref{thm:gap} shows that there exists a non-adaptive policy corresponding to the selection of set $S^* \in\argmax_{S \in \I(\M)} F(S)$ that achieves at least a $\left(1 - 1/e\right)$ fraction of the value of the optimal adaptive policy. However, this does not immediately provide an efficient algorithm to find such a non-adaptive policy. The reason is that finding $\argmax_{S \in \I(\M)} F(S)$, i.e., maximizing a monotone submodular function with respect to a matroid constraint, is not computationally tractable. As mentioned before, the maximum $k$-cover, described in Section~\ref{sec:tightexample}, is a special case of this problem. \cite{Feige98} shows that it is not possible to find an approximation ratio better than $\efac$ for the maximum $k$-cover problem unless $\textrm{NP} \subset \textrm{TIME}(n^{O(\log \log n)})$. Hence, $\efac$ is the best possible approximation ratio achievable for any policy that can be implemented within polynomial time.



In the proof of Theorem~\ref{thm:gap}, we used a dynamic process $\Theta(t): t \in [0, 1]$ to find a vector $y^*$ such that $F(y^*) \geq (1-1/e) \sup_{y \in \B(\M)} f^+(y)$. 
This process takes an arbitrary $y \in \B(\M)$ and starts with $\Theta(0) = (0, 0, \cdots, 0)$. It keeps track of $n$ independent Poisson clocks $\C_1, \C_2, \cdots, \C_n$, where the rate of $\C_j$ is $y_j$, and whenever a clock $C_j$ sends a signal throughout the time $0 \leq t \leq 1$, the process substitutes $\Theta$ with $\Theta^{\uparrow j}$ (see Eq.~(\ref{eq:magic}) for the definition of the latter). \emph{If} we were given an appropriate vector $y$ (for instance, the $y$ that corresponds to the probability of each element being picked by the optimum adaptive policy), then we could simulate this process by \textbf{\emph{discretizing}} the time (we will discuss discretizing later in detail).  However, we do not know any such vector a priori. The idea of our algorithm is to use a \textbf{\emph{greedy}} approach to select the entries that are going to be updated at any given point in time. See Figure~\ref{fig:generalizedcontgreedy} for the description of our algorithm.

Our method is a generalization of the continuous greedy approach of~\cite{CalinescuCPV08} performed on function $F$ as defined in Eq.~(\ref{expancion}). In fact, for the deterministic problem (i.e., when each $X_j$, if picked, will always have a realized value of $1$) our method becomes the equivalent of the continuous greedy method of~\cite{CalinescuCPV08}.  


\textbf{\textsc{The High Level Idea:}} Our algorithm (whose description can be seen below) keeps track of a vector $y(t)$, starting from $y(0) = (0, 0, \cdots, 0)$ (\textbf{see step (i)}). It then updates $y(t)$ by taking steps of size $\delta$ (for some carefully chosen $\delta$ as we will discuss later) throughout the time, until $t = 1$. In order to find out how to update $y(t)$, our algorithm first calculates the potential marginal contribution of each $y_j$ to the current $F(y(t))$. In order to mimic the function $F$ as defined in Eq.~(\ref{expancion}), our algorithm produces a randomly selected set $R(t)$ that contains each element $X_j$ with probability $y_j(t)$ independently at random. Then it simulates a realization of $R(t)$, namely $\Theta_{R(t)}$, using the probability distributions $g_j$. The marginal contribution of the element $y_j$ to $F(y(t))$ for this specific realization can be simply measured by $f(\Theta_{R(t)}^{\uparrow j}) - f(\Theta_{R(t)})$. In order to estimate the expected marginal contribution of $y_j$, our algorithm takes the average of the mentioned subtraction over a large enough number of samples (\textbf{see step (ii)}). Finally, our algorithm finds a maximum-weight independent set according to the weights defined as the expected marginal contribution of each element. This can be done via the classic greedy algorithm for finding the maximum-weight independent sets in matroids (see~\citealt{rado, gale, edmonds}). 
Finally, $y(t+\delta)$ is obtained by adding $\delta$ to the entries of $y(t)$ corresponding to the elements of the maximum-weight independent set (\textbf{see step (iii)}).

\begin{figure}[!t] 
\caption{The Stochastic Continuous Greedy Algorithm}
\begin{center}
\fbox{\parbox{6in}{ \vspace*{3mm} 
\begin{quote}
\begin{enumerate}[i.]
\item Let $\delta = 1/(\lceil\frac{3d}{\epsilon}\rceil)$, where $d$ denotes the rank of the matroid $\M = (\A, \I)$; $n = |\A|$, and $\epsilon$ is the discretization parameter.
Start from $t = 0$ and $y(0) = 0$.\newline 

\item 
Let $R(t)$ be a set containing each element $X_j \in \A$ independently with probability $y_j(t)$.  \\
Let $\Theta_{R(t)}$ be the realization of set $R(t)$, i.e., a vector whose $j$-th entry is independently drawn from the probability distribution function $g_j$ if $X_j \in R(t)$, and is $0$ if $X_j \notin R(t)$. \\
Let $w_j(t)$ be an estimate of $\E_{\Theta, R}\left[f(\Theta_{R(t)}^{\uparrow j}) - f(\Theta_{R(t)})\right]$, obtained by averaging over $\frac{4}{\delta^2}(1+\ln n - 0.5\ln \delta)$ samples of $\Theta_{R(T)}$. 
\medskip

\item Let $I(t)$ be a maximum-weight independent set in $\M$ according to the weights $w_j(t)$. Let $y(t+\delta) = y(t) + \delta~.~\1_{I(t)}.$ \newline
\item Increment $t:= t+\delta$; if $t < 1$, go back to (ii). Otherwise, return $y(1)$.
\end{enumerate}
\end{quote}
\medskip
}}
\end{center}
\label{fig:generalizedcontgreedy}
\end{figure}


In order to analyze the stochastic continuous greedy algorithm, we follow a  similar path to that of the proof of Theorem~\ref{thm:gap} in the previous section. We show that our sampling in step (ii) provides a good approximation of the weights, and in particular, the weight of the independent set selected at step (iii) in each iteration is close to the weight of the actual maximum-weight independent set. For the rest of this section, ``with high probability'' means with probability of at least $(1 - 1/\textrm{poly}(n))$.
The proofs of the following lemmas can be found in the online appendix. 

\begin{lemma}\label{superkhafan}
If the set $I(t)$ is chosen by step (iii) of the stochastic continuous greedy algorithm at time $t$, then with high probability, for any $t$ we have
\begin{equation*}
\sum_{j: X_j \in I(t)} \E_{\Theta, R}\left[f(\Theta_{R(t)}^{\uparrow j}) - f(\Theta_{R(t)})\right] \geq 
\left(\max_{I \in \I} \sum_{j: X_j \in I} \E_{\Theta, R}\left[f(\Theta_{R(t)}^{\uparrow j}) - f(\Theta_{R(t)})\right]\right)- 2d\delta~.~\OPT,
\end{equation*}
where $\OPT$ is the value of the optimum adaptive policy.
\end{lemma}

Now, we focus on the change in the value of $F(y(t+\delta))$ compared to that of $F(y(t))$. We show that the actual difference between $F(y(t+\delta))$ and $F(y(t))$ is bounded from below by $\delta (1 - d \delta)$ times the weight of the maximum independent set selected by step (iii). We note that this lemma could be seen as analogous to Eq.~(\ref{secondneq}) in our continuous Poisson process.\footnote{We remark that the proof of Lemma~\ref{updatebound} does not depend on the accuracy of the estimation and only uses a stochastic dominance result.}
\begin{lemma}\label{updatebound}
At each $t$ we have $$F(y(t+\delta)) - F(y(t)) \geq \delta(1-d\delta)\sum_{j: X_j \in I(t)} \E_{\Theta, R}\left[f(\Theta_{R(t)}^{\uparrow j}) - f(\Theta_{R(t)})\right],$$ where $d$ is the rank of the matroid $\M$.
\end{lemma}

The final step in the analysis of our algorithm is to provide an analogous statement to the differential equation of Eq.~(\ref{eq:derivative}). This is done through the following lemma whose proof is presented in the appendix.
\begin{lemma}
\label{lem:final}
Throughout the algorithm, with high probability, for every $t$ we have the following:
$$F(y(t + \delta)) - F(y(t)) \geq \delta (1 -  3d\delta) \sup_{y \in B(\M)}\{f^*(y)\} - F(y(t)).$$
\end{lemma}


Now we are ready to finish the analysis of the algorithm.
Let us first denote the value of $(1 -  3d\delta) \sup_{y \in B(\M)}\{f^*(y)\}$ by $\S$.  
From Lemma~\ref{lem:final} we have $\S - F(y(t+\delta)) \leq (1 - \delta)(\S - F(y(t)))$ with high probability throughout the algorithm. By induction and considering the fact that $F(y(0)) = 0$, we get $\S - F(y(k\delta)) \leq (1 - \delta)^{k}\S$ for every $k$ such that $k\delta \leq 1$. In particular, for $k = 1/\delta$ we will have
$$\S - F(y(1)) \leq (1 - \delta)^{1/\delta}\S \leq \frac{1}{e}\S.$$

Thus, $F(y(1)) \geq (1 - 1/e) \S \geq (1 - 1/e - 3d\delta)  \sup_{y \in B(\M)}\{f^*(y)\}$. Note that $\delta = 1/(\lceil\frac{3d}{\epsilon}\rceil)$. Hence, the point $y(1) \in B(\M)$ found by the stochastic continuous greedy algorithm with high probability has the value of at least $(1 - 1/e - \epsilon)\sup_{y \in B(\M)}\{f^*(y)\}$. Therefore, we obtain the following result:
\begin{lemma} \label{lemma:poly}
For any $\epsilon >0$, the stochastic continuous greedy algorithm finds a vector $y^* \in \B(\M)$ in polynomial time in $|\A|$ and $1/\epsilon$ such that $F(y^*) \geq  \sup_{y \in \B(\M)} (1-1/e-\epsilon)f^*(y)$.
\end{lemma}

{By Lemma~\ref{lem:fractional_non_adapt} and using vector $y^*$ mentioned in the above lemma, we can find in polynomial time a subset $S^* \in \I$ such that $F(S^*) \geq F(y^*) \geq \sup_{y \in \B(\M)} (1-1/e-\epsilon)f^*(y)$. Recall that by Lemma~\ref{lem:fplusstar}, we have $f^*(y) \geq f^+(y)$ for every $y \in \B(\M)$. Hence, $F(S^*) \geq (1 - 1/e - \epsilon) \sup_{y \in \B(\M)} f^+(y)$. On the other hand, by Lemma~\ref{adaptiveupperbound}, $\sup_{y \in \B(\M)} f^+(y)$ is an upper bound on the value of the optimal adaptive policy. Therefore, we can conclude the following.}



{
\begin{theorem}[Approximately Optimal Polynomial-Time Adaptive Policy] \label{thm:poly}
For any $\epsilon >0$ and any instance of an SMSM problem, a non-adaptive policy that obtains a $(1-1/e-\epsilon)$-fraction of the value of the optimal adaptive policy
can be found in polynomial time $($with respect to $|\A|$ and $1/\epsilon)$.
\end{theorem}
}


\section{Approximation Ratio of Simple Myopic Policies} \label{sec:matroidpolicies}
In this section, we present an adaptive myopic policy for the SMSM problem with a general matroid constraint. The set of feasible solutions is the intersection of $\kappa$ matroids. Let  $\M_1 = (\A, \I_1), \M_2=(\A, \I_2), \cdots, \M_{\kappa}=(\A, \I_{\kappa})$ be $\kappa$ matroids all defined on the ground set of elements $\A$. The feasible set for the SMSM problem that we study in this section is  $\I = \I_1 \cap \I_2 \cap \cdots \cap \I_{\kappa}$. We will present a myopic policy whose approximation ratio with respect to the optimal adaptive policy is $\frac{1}{\kappa+1}$.
At the end of this section, we explain that the myopic policy achieves the approximation ratio of $\efac$, if $\kappa = 1$ and the matroid is uniform.

The policy is given in Figure~\ref{fig:adaptivepolicy}.
Remember that the feasible set of elements is given by $\I$.
At each iteration, from the elements in $\A$ that have not been picked yet,
the policy chooses an element with the maximum expected marginal value.
We denote by $S_t$ the set of elements chosen by the adaptive policy up to iteration $t$.
By abuse of notation, let $s_t$ denote the realizations of {all} of the elements in $S_t$.
Also, $U_t$ is the set of elements probed but not chosen by the policy due to the matroid constraint.

\begin{figure}[!t]
\caption{The Adaptive Myopic Policy}
\begin{center}
\fbox{\parbox{5.5in}{ \vspace*{3mm} 
\label{fig:adaptivepolicy}
%
{
\begin{description}
\item\i Initialize $t = 0, S_{0} = \emptyset, U_{0} = \emptyset$, $s_0 = \emptyset$.
\item\i While $(U_{t} \cup S_{t} \neq \A)$ do
\begin{description}
\item\i\i $t \leftarrow t + 1$; $S_{t} \leftarrow S_{t-1}$; $U_{t} \leftarrow U_{t-1}$;  $s_{t} \leftarrow s_{t-1}$.
\item\i\i Repeat
\item\i\i\i   Find $X_i\in\argmax_{X_i \in \A \setminus (U_{t} \cup S_{t})} \E[F(S_{t-1}\cup \{X_i\})|s_{t-1}]$.
\item\i\i\i   If $S_{t-1} \cup \{X_i\} \notin \I$, then
\begin{description}
\item\i\i\i\i $U_{t} \leftarrow U_{t} \cup \{X_i\}$,
\end{description}
\item\i\i\i   else
\begin{description}
\item\i\i\i\i $S_{t} \leftarrow S_{t} \cup \{X_i\}$.
\item\i\i\i\i Observe $x_i$ and update the state $s_t$, i.e., $s_t\leftarrow s_{t-1}\cup \{x_i\}$.
\end{description}
\item\i\i Until $(S_t \neq S_{t-1})$ or $(U_{t} \cup S_{t} = \A)$.
\item\i End (while)
\end{description}
\end{description}
}
\medskip
}}
\end{center}
\medskip
\end{figure}

Note the exit condition for the repeat loop. The repeat loop will continue until either a new element $X_i$ is added to the set of selected elements (i.e., $S_t = S_{t-1} \cup \{X_i\} \neq S_{t-1}$) or all elements in the ground set $\A$ are probed (i.e., $U_{t} \cup S_{t} = \A$).  In the former, the policy will continue probing more elements in order to possibly add new elements to $S_t$. In the latter, however, after exiting the repeat loop, the policy immediately jumps out of the while loop, too. Consequently, the policy ends.

\begin{theorem}[Adaptive Myopic Policy] \label{halfapprox}
If feasible domain $\I$ is the intersection of $\kappa$ matroids, then the approximation ratio of the adaptive myopic policy with respect to any optimal adaptive policy is $\frac{1}{\kappa+1}$.
\end{theorem}

The proof is given in Appendix~\ref{app:matroidpolicies}. In order to prove the above bound, we will rely on the following lemma proved by~\cite{FisherNW78-2}. Note that in each iteration of the while loop, at least one new element of $\A$ is probed. Hence, $|\A|$ is an upper bound on the number of iterations of the while loop  in the myopic policy. For the clarity of the presentation, if the policy ends at some iteration $t^*$ where $t^* < {|\A|}$, we define $U_{i} := U_{t^*}$ and $S_{i} := S_{t^*}$ for all $i > t^*$. (In other words, we assume that the policy continues until the $|\A|$-th iteration but does not do anything after iteration $t^*$.) 
%
%
\begin{lemma}\label{lem:fnwp1} {\em (\citealp{FisherNW78-2})}
{Let $P \in \I_1\cap \I_2 \cap \cdots \cap \I_{\kappa}$ be any arbitrary subset in the intersection of $\kappa$ matroids, $\M_1, \M_2, \cdots, \M_{\kappa}$. For $0 \leq i \leq {|\A|}$, let $C_{i} = P \cap (U_{i+1} \setminus U_{i})$, where $U_i$'s are coming from the Adaptive Myopic Policy. We will have $\sum_{i=0}^{t} |C_i| \leq \kappa t$, for every $1 \leq t \leq {|\A|}$. }
\end{lemma}

Note that $C_i$'s, $U_i$'s, and $S_i$'s are random sets that depend on the decisions made by the policy and possibly based on the outcomes of the previous realizations of the chosen elements. However, the lemma holds for every realization path because the above property is a consequence of
the matroid constraint, not the realizations of the element chosen by the policy.

Define $\Delta_t := F(S_{t}) - F(S_{t-1})$, for $1 \leq t \leq {|\A|}$.
The main difficulty in proving the theorem compared to the non-stochastic case is that
the realized values of $\Delta_t$ are not always decreasing --- note that $\E[\Delta_t|s_{t}] \geq \E[\Delta_{t+1}|s_{t}]$ does not necessarily hold due to the stochastic nature of the problem. (Because at time $t$ the actual value of $\Delta_t$ has been realized, and it may have turned out to be very small. On the contrary, $\E[\Delta_t|s_{t - 1}] \geq \E[\Delta_{t+1}|s_{t - 1}]$ always holds, and this is what we are going to use in our proof.)
In addition, the sequence of elements chosen by the optimal adaptive policy is random; see the proof in the appendix for more details. \upt{We point out that ${1\over 1+\kappa}$ is a tight bound on the approximation ratio of the myopic policy~\citep{FisherNW78-2}.}
%
%
%
%
%
%
In Appendix~\ref{app:matroidpolicies}, we show that the myopic policy obtains a stronger approximation ratio if the constraint matroid is uniform.

\begin{theorem}[Uniform Matroid] \label{thm:uniform} 
Over a uniform matroid, the approximation ratio of the adaptive myopic policy 
with respect to the optimal adaptive policy is equal to $\efac$.
\end{theorem}

\upt{We note that achieving any approximation ratio better than the above is computationally intractable. In fact, as mentioned earlier, even in deterministic settings the problem of maximizing a submodular function with respect to cardinality constraints cannot be approximated with a ratio better than $\efac$ within polynomial time; see~\cite{Feige98}.}



%
%
%
%

\section{Discussions} \label{sec:discussion}
In this section, we discuss some of our previous assumptions and explain how our results would change in their absence.

\subsection{Computation of $F$}  \label{sec:f_computation}
The problem of calculating $F$ and functions similar to it has appeared in several related works and is usually solved via sampling. For instance, in the framework of~\cite{VondrakThesis07, Vondrak08}, where the author deals with a similar notion of $F$ but only with Bernoulli distribution for all distributions $g_i$-s, it is shown that with repeated sampling one can get an estimation of $F$ within a multiplicative error of $1\pm 1/\textrm{poly}(n)$. The following proposition generalizes their results to Lipschitz continuous functions.
\begin{proposition}[Additive Error] \label{lem:sampling}
Suppose $f$ is $K$-Lipschitz continuous and for all $i$, $\emph{\Var}[X_i] \leq \V$. Then, for any arbitrary values of $0 < \epsilon < 1$ and $\delta > 0$, the average value of $\ff(\Theta_S)$ for $t =  \lceil K^2 n^2 \V \epsilon^{-1} \delta^{-0.5} \rceil$ independent samples is within the interval $\Big(\emph{\E}[\ff(\Theta_S)] - \epsilon, \emph{\E}[\ff(\Theta_S)] + \epsilon \Big)$, with probability at least $1-\delta$. 
\end{proposition}
The proof is given in the online appendix. The above lemma could be used to achieve any desirable polynomially small additive error. For instance, $\lceil K^2 n^5 \V \rceil$ samples is enough to ensure that with probability $1 - n^{-2}$, the average sampled value of $f(\Theta_S)$ is within the interval $\Big(\E[\ff(\Theta_S)] - n^{-2}, \E[\ff(\Theta_S)] + n^{-2} \Big)$. 

Note that if we have an upper bound on the values of $f$ (for instance, 
where $X$'s can only take binary values), then we can use Chernoff inequality in the proof. That allows us to get the same bounds by only $O(\log K + \log n + \log \V)$ samples. Also, since in this case the values $F$ can take would be bounded, one can easily come up with multiplicative error terms. This is formalized via the following proposition (proved in the online appendix).

\begin{proposition}[Multiplicative Error] \label{lem:sampling2}
Suppose for any realization $\Theta_S$ with $\emph{\Pr}[\Theta_S] > 0$, we have $1 \leq f(\Theta_S) \leq \FF$. Then, for any arbitrary values of $0 < \epsilon < 1$ and $0<\delta$, the average value of $f(\Theta_S)$ for $t = \lceil-2\FF^2 \ln(\delta/2) \epsilon^{-1}\rceil$ independent samples is within the interval $\Big(\emph{\E}[f(\Theta_S)](1-\epsilon), \emph{\E}[f(\Theta_S)](1+\epsilon)\Big)$, with probability at least $1 - \delta$.
\end{proposition}

Throughout this paper we have assumed that the values of the form $\E[f(\Theta_S)]$, or equivalently $F(S)$, for every subset $S$ are accessible via an oracle. However, if such an oracle is not available, then one can incorporate the above lemmas to achieve our results with the desirable degree of precision. To observe this, we note that our algorithms have at most $n$ steps, and at each step there are at most $n$ values of the form $\E[f(\Theta_S)]$ that should be estimated. Hence, at most $n^2$ estimations of function $\E[f(\Theta_S)]$ will be needed. Due to Proposition~\ref{lem:sampling}, each of these estimations will have at most $\epsilon$ additive error with probability at least $1 - \delta$ if the average is taken over $\lceil K^2 n^2 \V \epsilon^{-1} \delta^{-0.5} \rceil$ samples. Therefore, union bound implies that with a probability at least $1- n^2\delta$, all these samples will be within $\epsilon$ error from their true value. Since all the inequalities in our proofs have at most $n$ terms, an error of at most $\epsilon n$ will appear in our final results. Hence, if we set $\delta = n^{-3}$, we need to average over  $\lceil K^2 n^2 \V (\tfrac{\epsilon}{n})^{-1}  (n^{-3})^{-0.5} \rceil$ (or simply, $\lceil K^2 n^{4.5} \V {\epsilon}^{-1} \rceil$) samples throughout our algorithms in order  to ensure that with probability at most $1 - n^2 . n^{-3}$ (or simply, $1 - 1/n$) our results will stay valid with an additional error of $\epsilon$.

\subsection{Choosing Elements with Replacement} \label{sec:mult}

\upt{So far, in our model we did not allow for multiple draws of any element. In other words, once an element is picked and its value is realized, it cannot be picked again later. 
However, this assumption is made without loss of generality and in order to simplify the presentation. 
This follows from the observation that one can reduce the problem in which replacements are allowed to our original setting by creating identical and independent copies of each random variable in order to simulate multiplicity.}

\upt{Consider the problem of maximizing a stochastic monotone submodular function defined on a function $f:\R_+^n \rightarrow \R_+$. 
Also, suppose that we allow for multiple selections of the entries, and if an entry corresponding to any $X_i$ is selected multiple times, then its final value will be the \emph{maximum} of all the realized values in the draws.\footnote{This is in line with the current literature on selection problems with ``multiplicity''. For instance, see~\cite{GoemansV06}, where multiple selection of a set in the stochastic set cover problem is considered.}
More formally, we allow for $m$ copies of each $X_i$, where $m$ is an upperbound on the maximum number of elements that could be picked. (For instance, with matroid constraints, $m$ could be the rank of the matroid.) Then, we construct $m$ copies of each $X_i$, namely, $X^{(i)}_1, X^{(i)}_2, \cdots, X^{(i)}_m$. Let $a_i$ denote the number of times element $X_i$ is selected, and $x^{(i)}_1, x^{(i)}_2, \cdots,  x^{(i)}_{a_i}$ represent the corresponding realizations. As before, the value of all unrealized elements $x^{(i)}_k$, $a_i + 1\le k\le m$, is defined to be zero. The final value of the objective function will be denoted by  function $\phi:\R_+^{mn} \rightarrow \R_+$ which is defined as follows. 
\begin{equation}
\label{eq:replacement}
\phi(x^{(1)}_1, \cdots,  x^{(1)}_{m}, x^{(2)}_1, \cdots,  x^{(2)}_{m}, \cdots, x^{(n)}_1, \cdots,  x^{(n)}_{m}) :=  f\left(\max_k \{x^{(1)}_k\}, \max_k \{x^{(2)}_k\}, {\cdots}, \max_k \{x^{(n)}_k\}\right).
\end{equation}
The next lemma shows that $\phi$ is monotone and submodular. The proof is presented in the 
appendix.\begin{lemma} \label{lem:replacement}
If function $f:\R_+^n \rightarrow \R_+$ is monotone submodular, then function $\phi:\R_+^{mn} \rightarrow \R_+$, as defined in {\em Eq.~(}\ref{eq:replacement}{\em )}, will be monotone submodular.
\end{lemma}
}
The above lemma shows that stochastic monotone submodular optimization problems with replacement, i.e., when multiple selection of an element might change its value, can be reduced to our original setting in which replacement is not allowed. Hence, our approximation guarantees and the adaptivity gap
will still hold if we allow for multiple draws. Our tightness results for approximation guarantees come from deterministic examples. Therefore, for the same reason, they hold as well when selection with replacement is allowed. 

%
%
We note that the above observations are in sharp contrast with minimization problems (e.g., minimum set cover) and threshold problems in which the goal is to reach a value of function above a certain hard-constrained threshold. For instance, in the problem of stochastic set cover  (where the objective is to cover \emph{all} the elements of a ground set using the minimum number of a given collection of its subsets), allowing a policy to choose multiple copies of an element significantly reduces the adaptivity gap, as shown by~\cite{GoemansV06}.

\subsection{Large Adaptivity Gap without the Independence Assumption} \label{apx:unbouned}
In this section, we present an example that shows that the adaptivity gap without the independence assumption can be at least $n/2$. 
Consider an instance of {SMSM2($n, M$)} with the ground set $\A = \{X_0, X_1, \cdots, X_n\}$, where $X_0$, with equal probability of ${1\over n}$,  takes one of the values in $\{1,2,\ldots,n\}$. Also, for $i$, $1\le i\le n$, we have $X_i = M \times \textbf{I}\{X_0 = i\}$ for some $M \gg n$, i.e., $X_j = M$ if $X_0 = j$; otherwise, $X_j=0$.
The goal is to maximize a \emph{linear} function $f$ that corresponds to the sum of the realized values of the chosen elements.
The constraint is to select at most two elements, i.e., an independent set from the matroid $\M = (\A, \{S \subseteq \A, |S| \leq 2\})$.

It is easy to see that any non-adaptive policy would yield a value of at most $2M/n$ because each element selected will contribute at most $M/n$ in expectation. However, an adaptive policy can select $X_0$ first and then select $j=x_0$, and therefore, obtain a value of at least $M$ (exactly $M+j$ to be more precise). Hence, the adaptivity gap is at least $n/2$.

\upt{Recently, \cite{GolovinK11,GolovinK11_2} show that the myopic adaptive policy provides an $\tfrac{e-1}{e}$-approximation to the optimal adaptive policy under a cardinality constraint when the {\em adaptive submodularity} assumption holds.
 Under adaptive submodularity, the random variables of interest (namely, $X_1, X_2, \cdots, X_n$) are not necessarily independent. Instead, it suffices that the function $f$ satisfies diminishing conditional expected marginal returns over partial realizations. In addition, they show an approximation factor of $\tfrac{1}{\kappa+1}$ for myopic adaptive policies under the intersection of $\kappa$ matroids. Their results show that our Theorems~\ref{halfapprox} and \ref{thm:uniform} will also hold in an adaptive submodular setting. In addition, see~\cite{AgrawalDSY12} who introduce the notion of {\em price of correlations} and study the effect of ignoring correlations in optimization problems.}

\upt{\subsection{Large Adaptivity Gap without the Monotonicity Assumption} \label{apx:nonmonotone}
Another important assumption in our model is the monotonicity of the objective function. This assumption is necessary in order to obtain our adaptivity gap results. Here, we provide an example of non-monotone stochastic submodular optimization with arbitrarily large adaptivity gap. Our example is an instance of the stochastic maximum $k$-coverage (similar to Section~\ref{sec:tightexample}), with an additional probing cost appearing in the objective. Formally, we are given $\A = \{X_1, X_2, \cdots, X_n\}$, a collection of finite sets, where a cost $c_i$ is associated with any $X_i$. The goal is as follows.
$$\max_{S \subseteq \{1, 2, \cdots, n\}, |S| \leq k} |\bigcup_{i \in S} X_i| - \sum_{i \in S} c_i.$$ We note that $|\cup_{i \in S} X_i|$ is a submodular function of $S$, and $\sum_{i \in S} c_i$ is an additive function of $S$. Hence, the objective is submodular (but not necessarily monotone) in $S$.}

\upt{In the stochastic version of the problem, each $X_i$ -- if chosen -- will cover a set $A_i$ with probability $p$ independently at random, or will turn out to be empty with probability $1-p$. We now formally show that this is a special case of our setting. We work with the state space $(\theta_1, \theta_2, \cdots, \theta_n)$ by starting from $(\nill, \nill, \cdots, \nill)$. Once an element $X_i$ is chosen, if it turns out to be non-empty, then $\theta_i := 1$, otherwise, $\theta_i := -1$. We define the value function $f:\{-1, \nill, 1\}^n \rightarrow \R$ as follows.
\begin{equation}
\label{eq:nejatbakhsh}
f(\theta_1, \theta_2, \cdots, \theta_n) := \Big\vert\bigcup_{\theta_i = 1} A_i\Big\vert - \sum_{i: \theta_i \neq \nill} c_i.
\end{equation}
Similar to the definition of the value function (see Eq.~(\ref{eq:value})), we replace $\nill$ with $0$ whenever we calculate the value of function $f$. The proofs of Lemmas~\ref{lem:nejatbakhsh}, \ref{lem:nonadaptivenonmonotone}, and \ref{lem:adaptivenonmonotone} are presented in the appendix.%
\begin{lemma}
\label{lem:nejatbakhsh}
The function $f$ defined by~(\ref{eq:nejatbakhsh}) is submodular.
\end{lemma}
Now, we consider the problem of SMSM3($n, c, p$) in which all $A_i$-s are equal to $\{1\}$, all $c_i$-s are equal to a fixed $c := -(1-p)\ln (1-p)$, and also $k = n$ (meaning any subset $S \subseteq \{1, 2, \cdots, n\}$ can be chosen).
\begin{lemma} \label{lem:nonadaptivenonmonotone}
The optimal non-adaptive policy for SMSM3($n, c, p$) is to pick a subset $S$ consisting of only one element. The expected value of  this policy is $p-c$. 
\end{lemma}
Now, consider an adaptive policy $\pi$ that picks the elements in $\{1, 2, \cdots, n\}$ one by one, until the first time that an $X_i$ turns out to be non-empty, and hence, covers $\{1\}$. Then the policy stops.
\begin{lemma} \label{lem:adaptivenonmonotone}
The expected value of the adaptive policy $\pi$ is at least $\left(1 - (1-p)^n\right) - c/p$.
\end{lemma}
By combining the lemmas above, we observe that the adaptivity gap of SMSM3($n, c, p$) is at least as large as:
$$\frac{\left(1 - (1-p)^n\right) - c/p}{p - c}=\frac{p-c - p(1-p)^n}{p(p - c)}={1\over p} - \frac{(1-p)^n}{(p - c)} .$$
Note that fixing $p$, by choosing large enough values of $n$, the bound gets arbitrarily close to $1/p$. Hence, taking the value of $p$ sufficiently close to $0$ makes the adaptivity gap arbitrarily large.
}

\upt{Note that in our example, the value function $f$ takes negative values at some points. Our previous results were derived under the assumption that $f$ is defined on $R_+^n$ and only takes non-negative values. This is without loss of generality for monotone functions if we assume the value of taking ``no action'' is zero (i.e., $f(\nill, \cdots, \nill) = 0$). We leave it as an open question to find the adaptivity gap for non-monotone non-negative stochastic submodular maximization.}


%
\section{Concluding Remarks} \label{sec:conclusion}
In this paper we studied the problem of maximizing monotone submodular functions with respect to matroid constraints in a stochastic setting. Our model can be applied to various problems that involve both diminishing marginal returns and a stochastic environment. In order to capture the effect of partial contributions, we considered real-valued submodular functions instead of submodular set functions.
We showed that a myopic adaptive policy is guaranteed to achieve a ($1 - \tfrac{1}{e} \approx 0.63$)-approximation of the optimal (adaptive) policy. Also, we studied the concept of the adaptivity gap in order to compare the performance of non-adaptive policies (which are very easy to implement) to that of adaptive policies. We showed that a very straightforward myopic non-adaptive policy achieves a ($\tfrac{1}{2} \times (1 - \tfrac{1}{e}) \approx 0.316$)-approximation of the value of the optimal policy. Moreover, we showed that the adaptivity gap for these problems is at most $\tfrac{e}{e-1}$, which implies the existence of a non-adaptive policy that achieves a 0.63-approximation of the optimal policy. Finally, we provided a polynomial algorithm to find a non-adaptive policy that achieves a $(0.63 - \epsilon)$-approximation of the optimal policy for any positive  $\epsilon$. In our proofs, we generalized the techniques from the previous works in the literature (especially those in~\citealt{Vondrak08} and \citealt{CalinescuCPV08}) to the stochastic and continuous setting by proving new stochastic dominance results that could be of independent interest.

There are many interesting questions that we leave open for future research. One such question is whether our algorithms perform better for the specific submodular functions that appear in practice. For instance, the submodular functions that arise from the context of viral marketing have a very specific structure due to the properties of the underlying graph of relationships in social networks (such as being self-similar and scale-free, see~\citealt{Barabasi99}). Exploiting these properties may lead to improved analysis of our policies or to new policies with better performance guarantees. Also, studying similar classes of functions for which the adaptivity gap is bounded would be an interesting extension. We believe that one promising class of such functions could be the almost-concave functions, which are the functions whose values can be approximated closely by a concave function.

\upt{Another interesting direction would be to study the problem of maximizing {\em non-monotone} submodular functions in a stochastic setting; applications include facility location problem and maximum entropy sampling (see \citealt{LeeMNS}). The example provided in Section~\ref{apx:nonmonotone} showed that the adaptivity gap of non-monotone stochastic submodular maximization can be unbounded if we do not assume the non-negativity of the objective function. We leave it as an open question to find the adaptivity gap for \emph{non-monotone non-negative} stochastic submodular maximization. 
We emphasize that finding provably good approximate solutions for this problem has turned out to be more difficult than that of the monotone case even in the deterministic setting. Various methodologies have been used to approach the problem including local search~\citep{LeeMNS,Vondrak-nonmonotone} and simulated annealing~\citep{GharanV}. \cite{FeldmanNS} were the first to extend the \emph{continuous greedy} approach of \cite{CalinescuCPV08} to the non-monotone setting, and provided a $1/e$-approximation for the problem under cardinality constraint. Recently, \cite{BuchbinderFNS14} built upon the \emph{double greedy} approach\footnote{The double greedy approach was introduced by \citealt{BuchbinderFNS12} for unconstrained non-monotone submodular optimizations. Their technique, generalizes upon the continuous greedy approach by keeping track of two solutions (starting from $(0, 0, \cdots, 0)$ and $(1, 1, \cdots, 1)$, instead of only one solution starting from $(0, 0, \cdots, 0)$ as in the continuous greedy) and continuously moving them towards each other by a greedy rule until they coincide at time $t = 1$.} and used randomization to deal with non-monotonicity of the objective function. A promising approach to extend their results to the stochastic setting would be to combine it with our stochastic continuous greedy and develop a unified \emph{stochastic continuous double greedy} approach that could handle both the stochasticity of the environment and the non-monotonicity of the objective function.}

\medskip

\ACKNOWLEDGMENT{We would like to thank Amin Saberi, Jan Vondr{\'a}k,  Marek Adamczyk, Chandra Chekuri, and the anonymous referees whose comments and suggestions significantly improved this work.
We acknowledge that the example is section~\ref{sec:tightexample} was brought to our attention by Jan Vondr{\'a}k.}

\medskip

\begin{APPENDICES}
\section{Proofs} \label{app:proof}

\subsection{Appendix to Section~\ref{sec:gap}} \label{app:gap}

\begin{proof}{Proof of Lemma~\ref{adaptiveupperbound}}
An optimal adaptive policy only chooses independent sets of $\M$. Due to monotonicity,
all of these are also the bases of the matroid. Hence, an optimal adaptive policy ends up selecting one of the basis of the matroid. Thus, vector $y$, whose $i$-th entry represents the probability that element $X_i\in\A$ is ultimately chosen by this policy, is a convex combination of the characteristic vectors of the basis of $\M$. Moreover, the expected value of the adaptive policy is bounded by $f^{+}(y)$
because the policy has to satisfy the three properties mentioned earlier.
\qed\end{proof}
\medskip

\begin{lemma} \label{lem:fplusstar}
For $y\in\B(\M)$, we have $f^+(y)\le f^*(y)$.
\end{lemma}
\begin{proof}{Proof}
Consider the function $f^+$. Following the notation of $\Theta^{\uparrow j}$, for every $x \in \R$ we define $\Theta^{\uparrow j: x}$ as a vector that has all entries except the $j$-th one as the same as $\Theta$, and its $j$-th entry is defined as $\max\{\theta_j, x\}$.  Fix any $y \in \B(\M)$, any feasible probability measure $\alpha$ with respect to $y$ (see definition~(\ref{extensionintegral}) for $f^+$), and any given vector $\Thetazir \in \R_+^n$. We have
\begin{eqnarray}
\nonumber \int_{\Theta} \alpha_{\Theta} \ff(\Theta) d \Theta &\leq&
\int_{\Theta} \alpha_{\Theta}\left[\ff(\Thetazir) + \sum_{j = 1}^n \Big(\ff(\Thetazir^{\uparrow j, \theta_j}) - \ff(\Thetazir)\Big)\right] d \Theta \\
\nonumber&=& \ff(\Thetazir) + \sum_{j = 1}^n y_j \left[ \int_{\theta_j > \thetazir_j} \Big(\ff(\Thetazir^{\uparrow j, \theta_j}) - \ff(\Thetazir)\Big) g_j(\theta_j)  d\theta_j\right].
\end{eqnarray}

The inequality above holds due to the submodularity of $f$, and the equality is a consequence of the properties of the feasible probability measure $\alpha$. However, $\int_{\theta_j > \thetazir_j} \ff(\Thetazir^{\uparrow j, \theta_j}) g_j(\theta_j)  d\theta_j$ is
simply $\E[f(\Thetazir^{\uparrow j})]$. Hence,

$$\int_{\Theta} \alpha_{\Theta} \ff(\Theta) \leq \ff(\Thetazir) + \sum_{j = 1}^{n} y_j \Big(\E\left[f(\Thetazir^{\uparrow j})\right] - f(\Thetazir)\Big).$$

Note that the inequality above holds for any arbitrary $y \in \B(\M)$, any feasible probability measure $\alpha$, and any vector $\Thetazir \in \R_+^n$. Thus, we have
\begin{equation}  \label{starplus}
f^+(y) = \sup_{\alpha}\left\{\int_{\Theta} \alpha_{\Theta} \ff(\Theta)d \Theta\right\} \leq \inf_{\Thetazir} \left\{\ff(\Thetazir) + \sum_{j = 1}^{n} y_j \Big(\E\left[f(\Thetazir^{\uparrow j})\right] - f(\Thetazir)\Big)\right\} = f^*(y) .
\end{equation}
\qed\end{proof}

\subsection{Appendix to Section~\ref{sec:matroidpolicies}} \label{app:matroidpolicies}
\medskip

\begin{proof}{Proof of Theorem~\ref{halfapprox}:}
Let $P$ be the (random) set of elements chosen by the optimal adaptive policy. Let $S$ denote the final selection of the elements by our myopic policy, i.e., $S = S_{|\A|}$. 
Moreover, let $t$ be any arbitrary number between $1$ and ${|\A|}$.
Consider a realization $s_t$ of $S_t$.
By submodularity and monotonicity of $F$ we have
\begin{eqnarray}
\nonumber \E[F(P) - F(S)|s_t] \leq \E[F(P \cup S) - F(S)|s_t] & \le & \E[\sum_{l\in P\setminus S} (F(S + l) - F(S))|s_t].
\end{eqnarray}

The expectations in the above inequality are taken over the probability distribution of all possible realizations conditioned on the realized values of elements in $S_t$  are equal to $s_t$.

Since the above inequality holds for all $s_t$, we have
\begin{eqnarray}
\label{eq:initial} \E[F(P)] - \E[F(S)] & \le & \E[\sum_{l\in P\setminus S} (F(S + l) - F(S))].
\end{eqnarray}

Now, define $C_t = P \cap (U_{t+1} \setminus U_t)$, for all $0 \leq t \leq {|\A|}$. In other words, $C_t$ represents the elements of the optimum solution $P$ that our myopic policy has probed (but not picked) after picking its $t$-th and before picking its $(t+1)$-st elements. By the construction of the policy, we have $U_0 \subseteq U_1 \subseteq \cdots \subseteq U_{|\A|}$. Hence, $(U_{t+1} \setminus U_t)$ and $(U_{t'+1} \setminus U_{t'})$ are  disjoint for any $t \neq t'$. Therefore, $C_t$'s are all disjoint. On the other hand, Lemma~\ref{lem:fnwp1} implies that $|C_0| = 0$ and hence $C_0 = \emptyset$. Therefore, $\bigcup_{t=1}^{{|\A|}} C_{t}$ represents all the elements in $P$ that were not selected by our policy, i.e., $\bigcup_{t=1}^{{|\A|}} C_{t} = P\setminus S$. As a result, we can rewrite Inequality~(\ref{eq:initial}) as follows:
\begin{eqnarray}
\nonumber \E[F(P)] - \E[F(S)] & \le & \sum_{t=1}^{{|\A|}} \E[\sum_{l\in C_t} (F(S + l) - F(S))].
\end{eqnarray}

By expanding the expectation we have
\begin{eqnarray}
\label{eq:adapt1}  \E[F(P)] - \E[F(S)] & \le & \sum_{t=1}^{{|\A|}} \int_{s_{t-1}: S_{t - 1}\in \I} \E[\sum_{l\in C_t} F(S + l) - F(S)|s_{t - 1}] \Pr[s_{t-1}] ds_{t-1}
\end{eqnarray}

Fix any $s_{t-1}$ and $l \in C_t$. Note that the myopic policy
probes the elements in decreasing order of their expected marginal value and it has picked the $t$-th element of $S$ (i.e., the only element in $S_{t} \setminus S_{t-1}$) before $l \in C_t$. Hence, we have 
$\E[F(S_{t - 1} + l) - F(S_{t-1})|s_{t-1}] \leq   \E[F(S_{t}) - F(S_{t-1})|s_{t-1}] = \E[\Delta_t|s_{t-1}].$
Now, since $F$ is submodular and $S_{t-1} \subseteq S$, for any $l\in C_{t}$ we have
$\E[\sum_{l\in C_t} F(S + l) - F(S)|s_{t-1}] \leq \E[\sum_{l \in C_t}\Delta_t|s_{t-1}].$
By plugging the above inequality into~(\ref{eq:adapt1}), we get
\begin{eqnarray} \label{eq:adapt2}
\nonumber \E[F(P)] - \E[F(S)] &\leq& \sum_{t=1}^{{|\A|}} \int_{s_{t - 1}: S_{t - 1}\in \I} \E[\sum_{l\in C_t} \Delta_t|s_{t - 1}] \Pr[s_{t-1}] ds_{t-1}.
\end{eqnarray}
Using telescopic sums and the linearity of expectation, we derive the following; here, $\Delta_{{|\A|}+1}$ is defined as $0$:
\begin{eqnarray}
\nonumber  \E[F(P)] - \E[F(S)] & \leq & \sum_{t=1}^{{|\A|}} \int_{s_{t-1}: S_{t - 1}\in \I} \E[\sum_{l\in C_t} \sum_{j = t}^{{|\A|}} (\Delta_j - \Delta_{j + 1})|s_{t - 1}] \Pr[s_{t-1}] ds_{t-1} \\
\nonumber &=& \sum_{j = 1}^{{|\A|}} \sum_{t=1}^{j} \int_{s_{t-1}: S_{t - 1}\in \I}
\E[\sum_{l\in C_t} (\Delta_j - \Delta_{j + 1})|s_{t - 1}] \Pr[s_{t-1}] ds_{t-1}.
\end{eqnarray}

{Note that by applying the law of total probability, for every $t$ and $j$ the integral term in the above is in fact equal to $\E[\sum_{l \in C_t} (\Delta_j - \Delta_{j + 1})]$.
Again, we use the law of total probability, but this time by conditioning on $s_{j - 1}$ instead of $s_{t - 1}$. We will have}
\begin{eqnarray}
\nonumber \E[F(P)] - \E[F(S)] & \leq & \sum_{j = 1}^{{|\A|}} \sum_{t=1}^{j}
\int_{s_{j-1}: S_{j - 1}\in \I} \E[\sum_{l\in C_t} (\Delta_j - \Delta_{j + 1})|s_{j-1}] \Pr[s_{j-1}] ds_{j-1}.
\end{eqnarray}
Note that the term $(\Delta_j - \Delta_{j-1})$ in the innermost summation does not depend on the index of the sum, i.e., $\l \in C_t$. Hence, the r.h.s. can be written as follows: 
\begin{eqnarray}
\nonumber  &=& \sum_{j = 1}^{{|\A|}} \sum_{t=1}^{j}
\int_{s_{j-1}: S_{j - 1}\in \I} \left(E\left[|C_t| \E[\Delta_j - \Delta_{j + 1}|s_{j-1}]|s_{j-1}\right]\right) \Pr[s_{j-1}] ds_{j-1}.
\end{eqnarray}
Note that conditioned on $s_{j-1}$, the term $\E[\Delta_j - \Delta_{j + 1}|s_{j-1}]$ is by definition a constant, and we can take it out from the outer expectation. Hence,
\begin{eqnarray}
\label{eq:kmatroid}
 \E[F(P)] - \E[F(S)] & \leq & \sum_{j = 1}^{{|\A|}}
\int_{s_{j-1}: S_{j - 1}\in \I} \left(\E[\sum_{t=1}^{j} |C_t||s_{j-1}] \E[\Delta_j - \Delta_{j + 1}|s_{j-1}]\right) \Pr[s_{j-1}] ds_{j-1}.
\end{eqnarray}
We now use Lemma~\ref{lem:fnwp1}, which implies that in every realization $\sum_{t=1}^{j} |C_t| \leq \kappa j$. We also use the fact that due to the submodularity and the rule of the policy, we have $\E[(\Delta_j - \Delta_{j + 1})|s_{j-1}] \geq 0$. We conclude that
{
\begin{eqnarray*}
\E[F(P)] - \E[F(S)] & \leq & \sum_{j = 1}^{{|\A|}}
\int_{s_{j-1}: S_{j - 1}\in \I} \left(\E[\sum_{t=1}^{j} |C_t||s_{j-1}] \E[\Delta_j - \Delta_{j + 1}|s_{j-1}]\right) \Pr[s_{j-1}] ds_{j-1}\\
&\leq& \sum_{j = 1}^{{|\A|}}
\int_{s_{j-1}: S_{j - 1}\in \I} \kappa j \E[(\Delta_j - \Delta_{j + 1})|s_{j-1}] \Pr[s_{j-1}] ds_{j-1} \\
 & = & \sum_{j = 1}^{{|\A|}}
\int_{s_{j-1}: S_{j - 1}\in \I}  \kappa\E[\Delta_j|s_{j-1}] \Pr[s_{j-1}] ds_{j-1} \\
 & = & \kappa\sum_{j = 1}^{{|\A|}}  \E[\Delta_j] = \kappa\E[F(S)].
\end{eqnarray*}
}
%
Therefore, $\E[F(P)] \le (\kappa+1) \E[F(S)]$, as desired.
Finally,~\cite{FisherNW78-2} have shown that even in the non-stochastic setting, in the worst case, the approximation ratio of the greedy algorithm (hence the myopic policy) is equal to ${1\over \kappa+1}$. Therefore, this bound is tight.
\qed\end{proof}
\medskip
\medskip

\begin{proof}{Proof of Theorem~\ref{thm:uniform}:}
This proof is similar to the proof of~\cite{KleinbergPR04} for submodular set functions. The main technical difficulty in proving our claim is that the optimal adaptive policy here is a random set whose distribution depends on the realized values of the elements of $\A$.

Let $P$ denote the (random) set chosen by an optimal adaptive policy.
Also, denote the marginal value of the $t$-th element chosen by the myopic policy by $\Delta_t$,
i.e., $\Delta_t = F(S_t) - F(S_{t-1}).$ Now consider a realization $s_t$ of $S_t$. Because $F$ is stochastic monotone submodular, we have
\begin{equation} \label{eq:uniform1}
\E[F(P)|s_{t}] \leq \E[F(P \cup S_t)|s_t] \le \E[F(S_{t}) + \sum_{l\in P } (F(S_{t} + l) - F(S_t)) |s_t].
\end{equation}

The above expectations  are taken over all realization of $P$ such that the realized values of elements in $S_t$ are according to $s_t$.
Because the myopic policy chooses the element with maximum marginal value, for every $l \in P$, we have
$\E[\Delta_{t+1}|s_t] \ge \E[F(S_{t} + l) - F(S_t)|s_t].$
Therefore, we get $\E[F(P)|s_{t}] \le \E[F(S_{t}) + k \Delta_{t+1}|s_t].$ Since this inequality  holds for every possible path in the history, by
adding up all such inequalities for all $t$, $0\le t\le k-1$, we have
\begin{eqnarray*}
\nonumber \E[F(P)] & \le & \E[F(S_{t})] + k \E[\Delta_{t+1}] 
= \E[\Delta_1 + \cdots + \Delta_t] + k \E[\Delta_{t+1}].
\end{eqnarray*}

We multiply the $t$-th inequality, $0\le t\le k-1$,  by $\kfac^{k-1-t}$, and add them all.
The sum of the coefficients of $\E[F(P)]$ is equal to
\begin{eqnarray}
\label{eq:ag_rhs1}
    \sum_{t=0}^{k-1} \kfac^{k-1-t} = \sum_{t=0}^{k-1} \kfac^t = \frac{1 - \kfac^{k}}{1-\kfac} = k(1 - \kfac^{k}).
\end{eqnarray}
On the right hand side, the sum of the coefficients corresponding to the term $\E[\Delta_t]$, $1\le t\le k$, is equal to
\begin{eqnarray}
 k \kfac^{k-t} + \sum_{j=t}^{k-1} \kfac^{k-1-j} & = &
\kfac^{k-t} + k(1 - \kfac^{k-t})  =  k \label{eq:ag_lhs1}.
\end{eqnarray}

Thus, by inequalities~(\ref{eq:ag_rhs1}) and~(\ref{eq:ag_lhs1}), we conclude
$ (1 - \kfac^k) \E[F(P)] \le  \sum_{t=1}^{k} \E[\Delta_t] = \E[F(S_k)].$
Hence, the approximation ratio of the myopic policy is at least $\efac$.
\qed\end{proof} 
\medskip

\medskip\medskip

%

\bibliographystyle{ormsv080}
\bibliography{references}

\newpage

%
\centerline{\LARGE{Online Appendix}}

\section{Proofs}

\subsection{Proofs From Section~\ref{sec:tightexample}}

\begin{proof}{Proof of Lemma~\ref{lem:example_nonadaptive}}
Consider an arbitrary non-adaptive policy which picks set $S\subset\A$, containing $m^2$ sets from $\F$.  For each $i$, define $k_i = |S \cap \F^{(i)}|$.

Moreover, each element $i$ in the ground set is covered if and only if at least one of its corresponding chosen subsets are realized as a non-empty subset. Hence, it will be covered with probability $1 - (1 - \tfrac{1}{m})^{k_i}$. Therefore, the expected value of this policy is $\sum_i 1 - (1 - \tfrac{1}{m})^{k_i}$. Note that $1 - (1 - \tfrac{1}{m})^{x}$ is a concave function with respect to $x$ and also $\sum_i k_i = m^2$. Hence,  the expected value of the policy is maximized when $k_1 = k_2 = \cdots = k_m = m$. In this case, the expected value is $(1 - (1 - \tfrac{1}{m})^{m})m \approx (\efac)m$ for large $m$.
\qed\end{proof}

\begin{proof}{Proof of Lemma~\ref{lem:example_adaptive}}
Let $X_k$ be the indicator random variable corresponding to the event that the subset chosen at the $k$-th step is realized as a non-empty subset for any $1 \leq k \leq m^2$. Note that the number of elements covered by $\pi$ is $\sum_{k = 1}^{m^2} X_k$. Moreover, all $X_k$'s are independent random variables.

By the description of $\pi$, as long as $\sum_{i = 1}^k X_k < m^2$, $X_k$ will be one with probability $\tfrac{1}{m}$ and will be zero with probability $1 - \tfrac{1}{m}$. Also, when $\sum_{i = 1}^t X_k = m$, we have already covered all the elements in the ground set. Therefore, $X_{t + 1}, \cdots, X_{m^2}$ will all be equal to zero. With this observation, we define i.i.d random variables $Y_1, Y_2, \cdots, Y_{m^2}$, where each $Y_i$ is set to be one with probability $\tfrac{1}{m}$ and zero with probability $\tfrac{1}{m}$. Observe that $\min\{m, Y = \sum_k Y_k\}$ has the same probability distribution as $\sum_k X_k$.
Note that $\E[Y] = m$. Using Chernoff bound, we have
$\Pr[Y \leq m - m^{2/3}] \leq e^{-\frac{{m^{4/3}}}{2m}} = e^{-m^{1/3}}.$
Thus, with probability at least $1 - e^{-m^{1/3}}$, we have $Y > m - m^{2/3}$. Hence,
$\E[\sum_{k = 1}^{m^2} X_k] = \E[\min\{m, Y\}] \geq  (1 - e^{-m^{1/3}})(m - m^{2/3}) = m - o(m),$
which completes the proof of the lemma.
\qed\end{proof}

\subsection{Proofs From Section~\ref{sec:bounds_non_adapt}}

\begin{proof}{Proof of Lemma~\ref{lem:submodularity}}
The intuition behind the proof is that $F$ can be thought of as a linear combination of some monotone submodular functions. For monotonicity, let $S \subsetneq \A$ and $i$ be so that $X_i \notin S$. Consider any arbitrary realization $\bar{\Theta}_S = (\theta_1, \theta_2, ..., \theta_n)$, and let $Z=(\zeta_1, \zeta_2, \cdots, \zeta_n)$ be their corresponding values in Eq.~(\ref{eq:value}) such that $\ff(\bar{\Theta}_S) = f(Z)$. Because $X_i \notin S$, we have $\theta_i = \nill$ and consequently, $\zeta_i = 0$. Now, by adding $X_i$ to $S$ we will have the following.
\begin{eqnarray*}
E[\ff(\Theta_{S \cup \{X_i\}}) | \bar{\Theta}_S] &=& \int_{x_i \in \Omega_i} f(\zeta_1, \cdots, \zeta_{i - 1}, x_i, \zeta_{i+1}, \cdots, \zeta_n) g(x_i) dx_i \\
&\geq&\int_{x_i \in \Omega_i} f(\zeta_1, \cdots, \zeta_{i - 1}, 0, \zeta_{i+1}, \cdots, \zeta_n) g(x_i) dx_i \\
&=& f(\zeta_1, \cdots, \zeta_{i - 1}, 0, \zeta_{i+1}, \cdots, \zeta_n) =  \ff(\bar{\Theta}_S),
\end{eqnarray*}
where the inequality holds due to the monotonicity of the function $f$ and the fact that $\Omega_i \subseteq \R_+$.
Now, we apply the derived inequality to bound the value of $F$ over the subset $S \cup \{X_i\}$.
\begin{eqnarray*}
F(S \cup \{X_i\}) = \E[\ff(\Theta_{S\cup\{X_i\})}]&=& \int_{\bar{\Theta}_S \in \Omega} E[\ff(\Theta_{S\cup\{X_i\})}|\bar{\Theta}_S]g_S(\bar{\Theta}_S)d\bar{\Theta}_S\\
&\geq&\int_{\bar{\Theta}_S \in \Omega} \ff(\bar{\Theta}_S)g_S(\bar{\Theta}_S)d\bar{\Theta}_S = E[\ff(\Theta_S)] = F(S).
\end{eqnarray*}
This completes the proof of monotonicity of function $F$. 

The proof of submodularity also follows from a similar path. 
 Let $S$ and $T$ be any arbitrary subsets of the ground set $\A$. 
We have 
$$F(S) + F(T) = E[f(\Theta_S)] + E[f(\Theta_T)],$$
where $\Theta_S$ (resp. $\Theta_T$) is the realization of the elements if we choose the set of elements in $S$ (resp. $T$). Hence,
\begin{equation}
\label{eq:subalaki} F(S) + F(T) = \int_{\Theta: \theta_i = \nill, i \notin S} f(\Theta) \prod_{i: X_i \in S} g_i(\theta_i)d\theta_i
 + \int_{\Theta: \theta_i = \nill, i \notin T} f(\Theta) \prod_{i: X_i \in T} g_i(\theta_i)d\theta_i.
\end{equation}


Note that $g_i$'s are  probability distributions corresponding to independent random variables. Hence, for every set $A \in \A$ we have 
$$\int_{\theta_i \in \Omega_i: X_i \in A} \prod_{i: X_i \in A} g_i(\theta_i) d\theta_i = 1.$$

Combining the above equality for $A = \A \setminus S$ and $A = \A \setminus T$ with Eq.~(\ref{eq:subalaki}) results in the following.
\begin{eqnarray*}
 F(S) + F(T) = &&\int_{\Theta: \theta_i = \nill, i \notin S} f(\Theta) \left( \int_{\theta_i \in \Omega_i: X_i \notin S} \prod_{i: X_i \notin S} g_i(\theta_i) d\theta_i \right) \prod_{i: X_i \in S} g_i(\theta_i)d\theta_i \\
 &+& \int_{\Theta: \theta_i = \nill, i \notin T} f(\Theta) \left( \int_{\theta_i \in \Omega_i: X_i \notin T} \prod_{i: X_i \notin T} g_i(\theta_i) d\theta_i \right) \prod_{i: X_i \in T} g_i(\theta_i)d\theta_i \\
 = && \int_{\Theta} f(\Theta(S)) \prod_i g_i(\theta_i) d\theta_i + \int_{\Theta} f(\Theta(T)) \prod_i g_i(\theta_i) d\theta_i \\
 = && \int_{\Theta} \Big(f(\Theta(S)) + f(\Theta(T))\Big) \prod_i g_i(\theta_i) d\theta_i,
\end{eqnarray*}

where for every $A \in \A$, $\Theta(A)$ is defined as $(\theta_1\times \1_{X_1 \in A}, \theta_2 \times \1_{X_2 \in A}, \cdots, \theta_n\times \1_{X_n \in A})$.

The submodularity of means that 
$$f(\Theta(S)) + f(\Theta(S)) \geq f(\Theta(S) \vee \Theta(T)) + f(\Theta(S) \wedge \Theta(T)).$$
However, $\Theta(S) \vee \Theta(T)$ is nothing but the component-wise maximum of $\Theta(S)$ and $\Theta(T)$. Hence, its $i$-th entry is the pairwise maximum of $\theta_i\times \1_{X_i \in S}$ and $\theta_i\times \1_{X_i \in T}$, or equivalently, $\theta_i\times \1_{X_i \in S \cup T}$. Similarly, the $i$-th entry of 
$\Theta(S) \wedge \Theta(T)$ is  $\theta_i\times \1_{X_i \in S \cap T}$. 

In summary,
\begin{eqnarray*}
 F(S) + F(T) &=& \int_{\Theta} \Big(f(\Theta(S)) + f(\Theta(T))\Big) \prod_i g_i(\theta_i) d\theta_i \\
    &\geq& \int_{\Theta} \Big(f(\Theta(S) \vee \Theta(T)) + f(\Theta(S) \wedge \Theta(T))\Big) \prod_i g_i(\theta_i) d\theta_i \\
    &\geq& \int_{\Theta} \Big(f(\Theta(S \cup T)) + f(\Theta(S \cap T))\Big) \prod_i g_i(\theta_i) d\theta_i \\
    &=& \E[f(\Theta_{S \cup T})] + \E[f(\Theta_{S \cap T})] \\
    &=& F({S \cup T}) + F({S \cap T}).
\end{eqnarray*}

This completes the proof of the lemma.
\qed\end{proof}

\subsection{Proof from Section~\ref{sec:polytight}} \label{apx:polyproofs}
\begin{proof}{Proof of Lemma~\ref{superkhafan}} By definition~(\ref{eq:magic}), the $j$-th entry of $\Theta_{R(t)}^{\uparrow j}$, for any fixed $R(t)$ is as follows:
\begin{itemize}
\item If $X_j \in R(t)$, then it is the maximum of two independently sampled random variables from the c.d.f. $G_j$. Hence, its own c.d.f. will be $G_j^2$.
\item If $X_j \notin R(t)$, then it is simply a random variable sampled from c.d.f. $G_j$. 
\end{itemize}
Similar to the rest of the paper, we assume that we have oracle access to such values, i.e. we know the values $\E_{\Theta}[f(\Theta_{R(t)})]$ and $\E_{\Theta}[f(\Theta_{R(t)}^{\uparrow j})]$ (and consequently, the value of $\E_{\Theta}\left[f(\Theta_{R(t)}^{\uparrow j}) - f(\Theta_{R(t)})\right]$) when $R(t)$ is fixed and the expectation is taken over all realizations of $\Theta$. (For computing these values using sampling refer to the appendix.) For the simplicity of notation, for a fixed $R(t)$ we define $f(R(t), j) = \E_{\Theta}\left[f(\Theta_{R(t)}^{\uparrow j}) - f(\Theta_{R(t)})\right]$. The interpretation of $f(R(t), j)$ is the expected marginal contribution of adding element $j$ to the set $R(t)$, while allowing to improve the value of $\theta_j$ if $j$ is already in $R(t)$.

Note that similar to Lemma 3.2 in~\cite{CalinescuCPV08} $R(t)$ is a random set containing each element $j$ independently at random with probability $y_j$. Now, consider an estimate $w_j(t)$ of $\E_{R}\left[f(R(t), j)\right]$ obtained by blah independent samples $R_i$ of $R(t)$.

Let us call an estimate $w_j$ \emph{bad} if $|w_j(t) - \E_R[f(R(t), j)]| > \delta \OPT$. Following the proof of that lemma, we define $X_i = (f(R_i, j) - \E_R[f(R(t), j)])/\OPT$, $k = \frac{4}{\delta^2}(1+\ln n - 0.5\ln \delta)$, and $a = \delta k$. We have $|X_i| \leq 1$ since $\OPT \geq \max_j f(\{\}, j) \geq \max_{R \subseteq \A} f(R, j)$, where the first inequality holds because of monotonicity of $f$ and the second inequality is due to the submodularity of $f$ and hence, diminishing marginal return of any element $j$. The estimate is bad if and only if $|\sum_i X_i| > a$. But the Chernoff bound (see Theorem 2.2 in~\citealt{CalinescuCPV08}) implies that $\Pr[|\sum_i X_i| > a] \leq 2e^{-\delta^2k/2} = 2e^{-2-2\ln n + \ln \delta} \leq \delta/(3n^2)$.

Note that in each step we compute $n$ estimates (one for each $X_j \in \A$) and the total number of steps is $1/\delta$. By the union bound, the probability of having any bad estimate is at most $\frac{n}{\delta}\times \frac{\delta}{3n^2}$. Hence, with probability at least $(1 - \tfrac{1}{3n})$ all estimates throughout the algorithm are good, i.e., 
\begin{equation}
|w_j(t) - \E_R[f(R(t), j)]| \leq \delta\OPT,
\end{equation}
for all $j$ and $t$. 
Now, let $I \in \I$ be any independent set. Note that $|I| = d$, and with high probability there is no bad estimates. Thus, with high probability
\begin{equation}\label{eq:dobaande} \left(\sum_{j: X_j \in I} \E_R[f(R(t), j)]\right) + d\delta~.~\OPT \geq \sum_{j: X_j \in I} w_j(t) \geq \left(\sum_{j: X_j \in I} \E_R[f(R(t), j)]\right) - d\delta~.~\OPT.
\end{equation}

Now, let $I^*$ denote an independent set achieving the  $\max_{I \in \I} \sum_{j: X_j \in I} \E_{\Theta, R}\left[f(\Theta_{R(t)}^{\uparrow j}) - f(\Theta_{R(t)})\right]$. Our algorithm finds a set $I(t) \in \I$ that maximizes $\sum_{j: X_j \in I} w_j$. Hence, with high probability
\begin{eqnarray*}
\sum_{j: X_j \in I(t)} \E_{\Theta, R}\left[f(\Theta_{R(t)}^{\uparrow j}) - f(\Theta_{R(t)})\right] &=&
\sum_{j: X_j \in I(t)} \E_R\left[\E_{\Theta}\left[f(\Theta_{R(t)}^{\uparrow j}) - f(\Theta_{R(t)})\right]\right] \\
\framebox[1.05\width]{by definition of $f(R(t), j)$}~~~~~~~~~~ &=& \sum_{j: X_j \in I(t)}  \E_R[f(R(t), j)] \\
\framebox[1.05\width]{by the first inequality in~(\ref{eq:dobaande})}~~~~~~~~~~ &\geq& \sum_{j: X_j \in I(t)} w_j(t)  - d\delta~.~\OPT \\
\framebox[1.05\width]{by the choice of $I(t)$}~~~~~~~~~~ &\geq& \sum_{j: X_j \in I^*} w_j(t)  - d\delta~.~\OPT \\
\framebox[1.05\width]{by the second inequality in~(\ref{eq:dobaande})}~~~~~~~~~~ &\geq& \sum_{j: X_j \in I^*}  \E_R[f(R(t), j)] - 2d\delta~.~\OPT \\
\framebox[1.05\width]{by definition of $f(R(t), j)$}~~~~~~~~~~ &=& \sum_{j: X_j \in I^*} \E_{\Theta, R}\left[f(\Theta_{R(t)}^{\uparrow j}) - f(\Theta_{R(t)})\right]  -  2d\delta~.~\OPT,
\end{eqnarray*}
which completes the proof of the lemma.
\qed\end{proof}
\medskip

\begin{proof}{Proof of Lemma~\ref{updatebound}:}
 As before, let $R(t)$ denote the random set that contains $X_j$ with probability $y_j(t)$ independently at random. Also, let $D(t)$ be a random set that contains $X_j$ with probability $\delta_j(t) = y_j(t+\delta) - y_j(t)$ independently at random. Note that according to our algorithm $\delta_j(t) = \delta~.~\1_{j \in I(t)}$. We show that $F(y(t+\delta)) \geq \E_{\Theta, R, D}[f(\Theta_{R(t)} \vee \Theta_{D(t)})]$, where the expectation is taken over all the random sets $R(t)$ and $D(t)$ and their corresponding realization of $\Theta$. We emphasize that due to the independence of each entry and also the monotonicity of $f$, we only need to show the entry-by-entry stochastic dominance of the left hand side over the right hand side. Let $\alpha(x)$ be the c.d.f. of the $j$-th entry of the left hand side, i.e. $F(y(t+\delta))$. Also, let $\beta(x)$ denote the c.d.f. of the $j$-th entry of $\E_{\Theta, R, D}[f(\Theta_{R(t)} \vee \Theta_{D(t)})]$. By the definition of $F$ we know that $F(y(t+\delta)) = \E_{\Theta, R}[f(\Theta_{R(t+\delta)})]$, where each $X_j$ is contained in $R(t+\delta)$ with probability $y_j(t+\delta) = y_j(t) + \delta_j(t)$. Hence, for the c.d.f. of its $j$-th entry we have 
$$\alpha(x) = 1 - \Big(y_j(t) + \delta_j(t)\Big) + G_j(x)\Big(y_j(t) + \delta_j(t)\Big),$$
for every $x \in \R_+$. 

On the other hand, the $j$-th entry of $\Theta_{R(t)} \vee \Theta_{D(t)}$ can be understood  as follows. The value of the entry is $0$ iff $X_j \notin R(t)$ and $X_j \notin D(t)$. This happens with probability $(1 - y_j(t))(1 - \delta_j(t))$. Similarly, with probability $y_j(t)(1 - \delta_j(t)) + (1 - y_j(t))\delta_j(t)$ it will be a variable drawn from the c.d.f. $G_j$. Finally, with probability $y_j(t)\delta_j(t)$ it will be the maximum of two independent random variables each drawn from c.d.f. $G_j$, where consequently, its c.d.f. will be $G_j^2$. In conclusion, the c.d.f. will be 
$$\beta(x) = \bigg((1 - y_j(t))(1 - \delta_j(t))\bigg) + \bigg(y_j(t)(1 - \delta_j(t)) + (1 - y_j(t))\delta_j(t)\bigg) G_j(x) + \bigg(y_j(t)\delta_j(t)\bigg)G_j(x)^2,$$
for every $x \in R_+$. But we have,
\begin{eqnarray*}
\beta(x) - \alpha(x) &=& \Big(y_j(t)\delta_j(t)\Big) - \Big(2y_j(t)\delta_j(t) \Big)G_j(x) + \Big(y_j(t)\delta_j(t)\Big) G_j(x)^2 \\
&=& \Big(y_j(t)\delta_j(t)\Big)\Big(1-G_j(x)\Big)^2 \\
&\geq& 0.
\end{eqnarray*}
This establishes the entry-by-entry stochastic dominance of 
$F(y(t+\delta)) = \E_{\Theta, R}[f(\Theta_{R(t+\delta)})]$ over $\E_{\Theta, R, D}[f(\Theta_{R(t)} \vee \Theta_{D(t)})]$. Thus, 
\begin{eqnarray*}
F(y(t+\delta)) - F(y) &=& \E_{\Theta, R}[f(\Theta_{R(t+\delta)})] - \E_{\Theta, R}[f(\Theta_{R(t)})] \\
&\geq& \E_{\Theta, R, D}[f(\Theta_{R(t)} \vee \Theta_{D(t)})] - \E_{\Theta, R}[f(\Theta_{R(t)})] \\
&\geq& \sum_{j} \Pr[D(t) = \{X_j\}] \bigg(\E_{\Theta, R}[f(\Theta_{R(t)} \vee \Theta_{\{X_j\}})] - \E_{\Theta, R}[f(\Theta_{R(t)})]\bigg) \\
\framebox[1.05\width]{by the definition of $\Theta_{R(t)}^{\uparrow j}$}~~~~~ &=& \sum_{j: X_j \in I(t)} \delta(1-\delta)^{|I(t)|-1}\E_{\Theta, R}\left[f(\Theta_{R(t)}^{\uparrow j}) - f(\Theta_{R(t)})\right] \\
\framebox[1.05\width]{ by $|I(t)| = d$~}~~~~~&\geq& \delta(1-d\delta)\sum_{j: X_j \in I(t)} \E_{\Theta, R}\left[f(\Theta_{R(t)}^{\uparrow j}) - f(\Theta_{R(t)})\right].
\end{eqnarray*}
\qed\end{proof}
\medskip

\begin{proof}{Proof of Lemma~\ref{lem:final}:}
Let $\bary \in B(\M)$ be an arbitrary point in the base polytope of $\M$.  This point can be written as the convex combination of some independent sets of $\M$.
By taking this convex combination on the corresponding inequalities proved in Lemma~\ref{superkhafan} with high probability we have
\begin{eqnarray*}
\sum_{j: X_j \in I(t)} &&\E_{\Theta, R}\left[f(\Theta_{R(t)}^{\uparrow j}) - f(\Theta_{R(t)})\right] \geq \max_{I \in \I} \sum_{j: X_j \in I} \E_{\Theta, R}\left[f(\Theta_{R(t)}^{\uparrow j}) - f(\Theta_{R(t)})\right] - 2d \delta\OPT \\
&\geq& \sum_{j \in \A} \bary_j\E_{\Theta, R}\left[f(\Theta_{R(t)}^{\uparrow j}) - f(\Theta_{R(t)})\right] - 2d \delta\OPT \\
&=& - F(y(t)) + {\left[\E_{\Theta, R}[f(\Theta_{R(t)})] +  \sum_{j \in \A} \bary_j\E_{\Theta, R}\left[f(\Theta_{R(t)}^{\uparrow j}) - f(\Theta_{R(t)})\right]\right]}  - 2d \delta\OPT \\
&\geq& - F(y(t)) + {\inf_{\Theta}\left[f(\Theta) +  \sum_{j \in \A} \bary_j\left[f(\Theta^{\uparrow j}) - f(\Theta)\right]\right]}  - 2d \delta\OPT,
\end{eqnarray*}
where the first equality is due to the fact that  by definition $F(y(t)) = \E_{\Theta, R}[f(\Theta_{R(t)}]$.

However, the infimum part is nothing but the definition of $f^*(\bary)$ in~(\ref{eq:defstar}). Therefore, with high probability
$$\sum_{j: X_j \in I(t)} \E_{\Theta, R}\left[f(\Theta_{R(t)}^{\uparrow j}) - f(\Theta_{R(t)})\right] \geq f^*(\bary) - F(y(t)) - 2d\delta~.~\OPT,$$
for any arbitrary $\bary \in B(\M)$. Hence, with high probability
$$\sum_{j: X_j \in I(t)} \E_{\Theta, R}\left[f(\Theta_{R(t)}^{\uparrow j}) - f(\Theta_{R(t)})\right] \geq \sup_{y \in B(\M)}\{f^*(y)\} - F(y(t)) - 2d\delta~.~\OPT.$$
Also, note that $\OPT$ is the optimum value of the adaptive policy which is bounded by $\sup_{y \in B(\M)} f^+(y)$ (see Lemma~\ref{adaptiveupperbound}). Also, we know from inequality~(\ref{starplus}) that $f^+(y) \leq f^*(y)$ for every $y \in B(\M)$. Hence, $\OPT \leq \sup_{y \in B(\M)} f^*(y)$. Consequently, with high probability
$$\sum_{j: X_j \in I(t)} \E_{\Theta, R}\left[f(\Theta_{R(t)}^{\uparrow j}) - f(\Theta_{R(t)})\right] \geq (1 -  2d\delta) \sup_{y \in B(\M)}\{f^*(y)\} - F(y(t)).$$
By applying the result of Lemma~\ref{updatebound} on the above inequality, with high probability we get the following.
\begin{eqnarray*} 
F(y(t + \delta)) - F(y(t)) &\geq& \delta (1- d\delta) \left[ (1 -  2d\delta) \sup_{y \in B(\M)}\{f^*(y)\} - F(y(t))\right] \\
&\geq& \delta \left[(1 -  3d\delta) \sup_{y \in B(\M)}\{f^*(y)\} - F(y(t))\right],
\end{eqnarray*}
which completes the proof of the lemma.
\qed\end{proof}
\medskip

%



\subsection{Proof from Section~\ref{sec:f_computation}}  \label{apx:f_computation}
\begin{proof}{Proof of Proposition~\ref{lem:sampling}}
Note that $F(S) = \E[\ff(\Theta_S)]$. By Lemma~\ref{lem:lipbound} below we have $\Var[\ff(\Theta_S)] \leq K^2 |S|^2 \V$. Suppose that we take $t$ independent samples of the value of $\ff(\Theta_S)$ and take their average as an estimation for the actual value of $\ff(\Theta_S)$. The derived sample will have a variance lower than $K^2 |S|^2 \V/t$. The proof is completed by taking $t = \lceil K^2 n^2 \V \epsilon^{-1} \delta^{-0.5} \rceil$ and applying the Chebyshev inequality.
\qed\end{proof}
\medskip

\begin{proof}{Proof of Proposition~\ref{lem:sampling2}} We use Chernoff bound. In particular, we use Theorems 6 and 7 in~\cite{fanchung}. Clearly, since $0 \leq f(\Theta_S) \leq \FF$, we have $\Var[f(\Theta_S)] \leq \FF^2/2$. Suppose we take $t$ samples and let $\Upsilon_t$ be their total summation. Theorem 6 in~\cite{fanchung} ensures that 
$$\Pr[\Upsilon_t \geq t.f(\Theta_S) + \lambda] \leq e^{-\tfrac{\lambda^2}{2(\Var(\Upsilon_t)+\FF\lambda/3)}}.$$
Note that $\Upsilon_t$ consists of $t$ independent samples of $f(\Theta_S)$. It means that $\Var(\Upsilon_t) = t.\Var(f(\Theta_S))$, and hence, it is bounded from above by $t\FF^2/2$. Now, let $\lambda = t\epsilon $. We will have
$$\Pr\left[\frac{\Upsilon_t}{t} \geq f(\Theta_S) (1+\epsilon)\right] \leq \textrm{exp}\Big(-\frac{t^2 \epsilon^2 }{2(t\FF^2/2+\FF t \epsilon f(\Theta_S)/3)}\Big) \leq \textrm{exp}\Big(-\frac{t^2 \epsilon^2}{2t\FF^2}\Big) = \textrm{exp}\Big(-\frac{t\epsilon}{2\FF^2}\Big).$$

We emphasize that our bound in the second inequality is loose. This is only for the sake of achieving the same upper-tail and the lower-tail bounds and subsequently, having a clearer representation.

Now, Theorem 7 in~\cite{fanchung} implies that
$$\Pr[\Upsilon_t \leq t.f(\Theta_S) - \lambda] \leq e^{-\tfrac{\lambda^2}{2t\FF^2}}.$$
Again, by letting $\lambda = t\epsilon $ we will have
$$\Pr\left[\frac{\Upsilon_t}{t} \geq f(\Theta_S) (1+\epsilon)\right] \leq \textrm{exp}\Big(-\frac{t^2\epsilon^2}{2t\FF^2}\Big)= \textrm{exp}\Big(-\frac{t\epsilon}{2\FF^2}\Big).$$

Now, we can conclude the following for the chance that the average of $t$ samples (i.e. $\Upsilon_t/t$) deviates from its expected value (i.e. $f(\Theta_S)$) by a factor more than $(1\pm \epsilon)$.
$$\Pr\left[f(\Theta_S) (1-\epsilon) \leq \frac{\Upsilon_t}{t} \leq f(\Theta_S) (1+\epsilon)\right] \geq  1-2\textrm{exp}\Big(-\frac{t\epsilon}{2\FF^2}\Big).$$
The above inequality for $t = \lceil-2\FF^2 \ln(\delta/2) \epsilon^{-1}\rceil$ completes the proof. 
\qed\end{proof}
\medskip

\subsubsection{Bounding the Variance of $\ff(\Theta_S)$:} 
Suppose $S = \{X_{i_1}, X_{i_2}, \cdots, X_{i_m}\}$ and let $S_j =  \{X_{i_1}, X_{i_2}, \cdots, X_{i_j}\}$. We write $F(S)$ as the telescopic sum of its marginal values.  
$$\ff(\Theta_S) = \sum_{j = 1}^m [\ff(\Theta_{S_j}) - \ff(\Theta_{S_{j-1}})].$$
First, we bound the variance of each term of the summation.
\begin{lemma}
\label{lem:lip}
For every $1 \leq j \leq m$,  if $f$ is $K$-Lipschitz continuous, then $$\Var[\ff(\Theta_{S_j}) - \ff(\Theta_{S_{j-1}})] \leq K^2 \Var[X_{i_j}].$$
\end{lemma}
\begin{proof}{Proof}
We will prove a stronger result here. We show that the claim holds for any possible realizations of $\Theta_{S_{j-1}}$. Fix $\Theta_{S_{j-1}} = (\theta_1, \theta_2, \cdots, \theta_n)$. Note that $\Theta_{S_{j}}$ will be different from $\Theta_{S_{j-1}}$ in only one dimension, namely the one associated with $X_{i_j}$. Define $h(x) := \ff(\Theta_{S_j}) - \ff(\Theta_{S_{j-1}})$ for the specific realization in which the value of $X_{i_j}$ is $x$. Equivalently,
\begin{eqnarray*}
&& h(x) = f(\zeta_1, \cdots, \zeta_{i_{j-1}}, x, \zeta_{i_{j+1}}, \cdots, \zeta_n) - f(\zeta_1, \cdots, \zeta_n), ~~~~~~\forall x \in \R_+\\
&& \textrm{where~}
\zeta_i = \left\{\begin{array}{ll} \theta_i & \theta_i = x_i, \\ 0 & \theta_i = \nill. \end{array}\right.
\end{eqnarray*}
Note that $X_{i_j} \notin S_{i_{j-1}}$, hence, $\theta_{i_j} = \nill$ and consequently, $\zeta_{i_j} = 0$. Therefore, because $f$ is monotone, the function $h$ defined above will be non-negative and increasing. Moreover, $h$ is also $K$-Lipschitz continuous.
%
%
%
Let $\mu = \E[X_{i_j}]$. Due to the $K$-Lipschitz continuity of $h(.)$, we have $h(x) \leq h(\mu) + K(x-\mu)$ for $x > \mu$. Also, we have $h(x) \geq h(\mu) - K(\mu-x)$ for $x<\mu$.
Hence, the variance of $h$ can be written as follows.
\begin{eqnarray*}
\Var[h(X_{i_j})] = \E\left[\left(h(X_{i_j}) - \E[h(X_{i_j})]\right)^2\right] 
&\leq& \E\left[\left(h(X_{i_j}) - h(\mu)\right)^2\right] \\
&\leq& \int_{x \in \Omega_{i_j}} K^2(x - \mu)^2  g_{i_j}(x) dx = K^2 \Var[X_{i_j}].
\end{eqnarray*}
Note that the second inequality  holds because  for every random variable $X$, the function $\nu(c) = \E[(X-c)^2]$ is minimized at $c = \E[X]$. This completes the proof of lemma.
\qed\end{proof}
\medskip
Now, we are ready to bound the variance of $\ff(\Theta_S)$.
\begin{lemma}
\label{lem:lipbound}
If $f$ is $K$-Lipschitz continuous, and $\V = max_{X_i \in S} \Var[X_i]$, then $\Var[\ff(\Theta_S)] \leq |S|^2K^2\V$.
\end{lemma}
\begin{proof}{Proof}
Define $Y_j = \ff(\Theta_{S_j}) - \ff(\Theta_{S_{j-1}})$. Hence, we have $\ff(\Theta_S) = \sum_{j = 1}^m Y_j$. By Lemma~\ref{lem:lip}, for all $j$, $\Var[Y_j] \leq K^2\V$. We have,
\begin{eqnarray*}
\Var[\ff(\Theta_S)] &=& \sum_{j: X_j \in S} \Var[Y_j] + \sum_{i < j: \{X_i, X_j\} \subset S} 2\Cov[Y_i, Y_j] \\
&\leq& |S| K^2\V + \sum_{i < j: \{X_i, X_j\} \subset S} 2\sqrt{K^2\Var[Y_i].K^2\Var[Y_j]} \\
&\leq& |S|K^2 \V + |S|(|S|-1)K^2 \V = |S|^2K^2\V.
\end{eqnarray*}
\qed\end{proof}

\upt{
\subsection{Proofs from Section~\ref{sec:mult}}
\begin{proof}{Proof of Lemma~\ref{lem:replacement}}
If $f$ is monotone, then it is easy to see that $\phi$ is also monotone. We now prove that $\phi$ is submodular.
Fix $x, y  \in \R_+^{mn}$. Let 
\begin{eqnarray*}
x &=& (x^{(1)}_1, \cdots,  x^{(1)}_{m}, x^{(2)}_1, \cdots,  x^{(2)}_{m}, \cdots, x^{(n)}_1, \cdots,  x^{(n)}_{m}), \textrm{ and} \\
y &=& (y^{(1)}_1, \cdots,  y^{(1)}_{m}, y^{(2)}_1, \cdots,  y^{(2)}_{m}, \cdots, y^{(n)}_1, \cdots,  y^{(n)}_{m}).
\end{eqnarray*}
We will show that $\phi(x \vee y) + \phi(x \wedge y) \leq \phi(x) + \phi(y)$. For every $i$, define $\bar{x}^{(i)} := \max_k \{x^{(i)}_k\}$ and $\bar{y}^{(i)} := \max_k \{y^{(i)}_k\}$, and let $\bar{x} := (\bar{x}^{(1)}, \bar{x}^{(2)}, \cdots, \bar{x}^{(n)})$ and $\bar{y} := (\bar{y}^{(1)}, \bar{y}^{(2)}, \cdots, \bar{y}^{(n)})$. Note that by the definition of $\phi$ in Eq.~(\ref{eq:replacement}), we have $f(x) = \phi(\bar{x})$ and $f(y) = \phi(\bar{y})$. Also, we have the following equation for $\phi(x \vee y)$.
\begin{equation*}
\phi(x \vee y) = f\left(\max_k\left\{\max \{x^{(1)}_k, y^{(1)}_k\}\right\}, \max_k \left\{\max \{x^{(2)}_k, y^{(2)}_k\}\right\}, {\cdots}, \max_k \left\{\max \{x^{(n)}_k, y^{(n)}_k\}\right\}\right). \label{eq:vee}
\end{equation*}
Note tha, for every $i$ we have $\max_k \left\{\max \left\{x^{(i)}_k, y^{(i)}_k\right\}\right\} = \max\left\{ \max_k\left\{x^{(i)}_k\right\}, \max_k\left\{ y^{(i)}_k\right\}\right\}$. 
Therefore, we get $\phi(x\vee y) = f(\bar{x} \vee \bar{y})$. \newline
On the other hand, again by the definition of $\phi$ we have the following.
\begin{equation*}
\phi(x \wedge y) = f\left(\max_k\left\{\min \{x^{(1)}_k, y^{(1)}_k\}\right\}, \max_k \left\{\min \{x^{(2)}_k, y^{(2)}_k\}\right\}, {\cdots}, \max_k \left\{\min \{x^{(n)}_k, y^{(n)}_k\}\right\}\right). 
\end{equation*}
However, the infamous max-min inequality  implies that for every $i$ we have $$\max_k \left\{ \min \left\{x^{(i)}_k, y^{(i)}_k\right\} \right\} \leq \min \left\{\max_k \left\{x^{(i)}_k\right\}, \max_k \left\{y^{(i)}_k\right\}\right\} = \min\left\{\bar{x}^{(i)}, \bar{y}^{(i)}\right\}.$$ 
Hence, using the monotonicity of $f$, we get the following inequality.
\begin{eqnarray*}
\nonumber \phi(x \wedge y) &\leq& f\left(\min\{\bar{x}^{(1)}, \bar{y}^{(1)}\}, \min\{\bar{x}^{(2)}, \bar{y}^{(2)}\}, \cdots, \min\{\bar{x}^{(n)}, \bar{y}^{(n)}\}\right) \label{eq:wedge} = f(\bar{x} \wedge \bar{y}).
\end{eqnarray*}
Therefore,
\begin{eqnarray*}
\phi(x \vee y) + \phi(x \wedge y) &\leq& f(\bar{x} \vee \bar{y}) + f(\bar{x} \wedge \bar{y}) \le f(\bar{x}) + f(\bar{y}) = \phi(x) + \phi(y),
\end{eqnarray*}
where the second inequality holds because $f$ is submodular. This completes the proof of the lemma.
\qed\end{proof}
\medskip
}

\upt{\subsection{Proofs from Section~\ref{apx:nonmonotone}}
\begin{proof}{Proof of Lemma~\ref{lem:nejatbakhsh}}
Let us denote by $\psi:2^{\{1, 2, \cdots, n\}} \rightarrow \R$ be the submodular \emph{set} function defined by $\psi(S) = |\cup_{i\in S} A_i|$. Also,  Let $c:2^{\{1, 2, \cdots, n\}} \rightarrow \R$ be the \emph{additive} cost function defined by $c(S) := \sum_{i \in S} c_i$.
Consider $\theta, \eta$ be any two arbitrary vectors in $\{-1, 0, 1\}^n$. Define $S_\theta := \{i: \theta_i = 1\}$ and $S'_\theta := \{i: \theta_i = -1\}$. Also, let $T_\eta := \{i: \eta_i = 1\}$ and $T'_\eta := \{i: \eta_i = -1\}$. By the definition of our specific value function, we have 
\begin{eqnarray}
\nonumber f(\theta) &=& \psi(S_\theta) - c(S_\theta \cup S'_\theta) \textrm{~~~and,} \\
\label{eq:khasteshodam1} f(\eta) &=& \psi(T_\eta) - c(T_\eta \cup T'_\eta).
\end{eqnarray}
Now, we consider $\theta \vee \eta$ and $\theta \wedge \eta$. Since, $\vee$ and $\wedge$ denote the componentwise maximum and minimum, we have $\{i: (\theta \vee \eta)_i = 1\} = S_\theta \cup T_\eta$ and $\{i: (\theta \vee \eta)_i = -1\} = S'_\theta \cap T'_\eta$. Moreover, $\{i: (\theta \wedge \eta)_i = 1\} = S_\theta \cap T_\eta$ and $\{i: (\theta \wedge \eta)_i = -1\} = S'_\theta \cup T'_\eta$. Hence,
\begin{eqnarray}
\nonumber
f(\theta \vee \eta) &=& \psi(S_\theta \cup T_\eta) - c\big((S_\theta \cup T_\eta) \cup (S'_\theta \cap T'_\eta)\big) \textrm{~~~and,} \\
\label{eq:khasteshodam2} f(\theta \wedge \eta) &=& \psi(S_\theta \cap T_\eta) - c\big((S_\theta \cap T_\eta) \cup (S'_\theta \cup T'_\eta)\big).
\end{eqnarray}
Note that $S_\theta$ and $S'_\theta$ are disjoint. Also, $T_\eta$ and $T'_\eta$ are disjoint. Thus $S_\theta \cup T_\eta$ and $S'_\theta \cap T'_\eta$ are disjoint. Similarly, $S_\theta \cap T_\eta$ and $S'_\theta \cup T'_\eta$ are disjoint. Therefore, by combining Eq.~(\ref{eq:khasteshodam1}) and~(\ref{eq:khasteshodam2}) we will have the following. 
\begin{eqnarray*}
f(\theta \vee \eta) + f(\theta \wedge \eta) &=&  \psi(S_\theta \cup T_\eta) - \Big(c(S_\theta \cup T_\eta) + c(S'_\theta \cap T'_\eta)\Big) + \psi(S_\theta \cap T_\eta) - \Big(c(S_\theta \cap T_\eta) + (S'_\theta \cup T'_\eta)\Big) \\ 
\framebox[1.05\width]{by additivity of $c$}~~~ &=& \psi(S_\theta \cup T_\eta) +  \psi(S_\theta \cap T_\eta) - \Big(c(S_\theta \cup S'_\theta) + c(T_\eta \cup T'_\eta)\Big)\\
\framebox[1.05\width]{by submodularity of $\psi$}~~&\leq& \psi(S_\theta)+\psi(T_\eta) - \Big(c(S_\theta \cup S'_\theta) + c(T_\eta \cup T'_\eta)\Big) \\
&=& f(\theta) + f(\eta).
\end{eqnarray*}
This completes the proof of submodularity of $f$.
\qed\end{proof} \medskip
\begin{proof}{Proof of Lemma~\ref{lem:nonadaptivenonmonotone}}
A non-adaptive policy picks a subset $S \subseteq \{1, 2, \cdots, n\}$. Due to the symmetry between all $X_i$-s, the value of the policy  depends only on $|S|$. Let $i := |S|$. The expected value of the policy will be $\psi(i) := \left(1- (1-p)^i\right)-ic$ that is a continuous function in $i \in \R$. However, $\partial \psi(i)/ \partial i = - (1-p)^i \ln(1-p) - c$, which is positive for $i < 1$ and negative for $i>1$ when $c = -(1-p)\ln (1-p)$. Thus, $\psi(i)$ attains its maximum at $1$. Therefore, the optimum non-adaptive policy picks only one element and its expected value is $p - c$.
\qed\end{proof}\medskip
\begin{proof}{Proof of Lemma~\ref{lem:adaptivenonmonotone}}
By following policy $\pi$, the probability that $X_i$ turns out to be the first non-empty set is $(1-p)^{i - 1} p$. In that case, the value obtained by the policy is $1 - ic$. Hence, the expected value of $\pi$ is 
\begin{eqnarray*}
\sum_{i = 1}^n (1-p)^{i - 1} p (1 - ic) & = & \left(\sum_{i = 1}^n (1-p)^{i - 1} p\right) -  \left(\sum_{i = 1}^n (1-p)^{i - 1} p i\right)c \\
&\geq& \left(\sum_{i = 1}^n (1-p)^{i - 1} p\right) -  \left(\sum_{i = 1}^\infty (1-p)^{i - 1} p i\right)c \\
&=&  \left(1 - (1-p)^n\right)  - c/p,
\end{eqnarray*}
where the equality is obtained by using the formula for the expected value of the geometric distribution.
\qed\end{proof}\medskip
}

\end{APPENDICES}



\end{document}